\title[Free transmission problems]{Free transmission problems}
\author{ Marcelo D. Amaral}
\author{Eduardo V. Teixeira}
\address{PUC-Rio, Departamento de Matem\'atica, Rua Marqu\^es de Sa\~o Vicente 225, G\'avea, Rio de Janeiro - RJ - Brazil, 22451-900}
\email{mamaral@mat.puc-rio.br}
\address{Universidade Federal do Cear\'a, Departamento de Matem\'atica, Campus do Pici - Bloco 914 , Fortaleza, CE - Brazil 60.455-760.}
\email{teixeira@mat.ufc.br} 
\date{}
\def \R {\mathbb{R}}
\def \div {\mathrm{div}}
\def \dist {\mathrm{dist}}
\def \loc {\mathrm{loc}}
\def \suchthat {\ \big | \ }
\def \H {\mathcal{H}^{n-1}}
\def \A {\mathcal{A}}
\def \Leb {\mathscr{L}^n}
\newtheorem{theorem}{Theorem}[section]
\newtheorem{lemma}[theorem]{Lemma}
\newtheorem{proposition}[theorem]{Proposition}
\newtheorem{corollary}[theorem]{Corollary}
\theoremstyle{definition}
\theoremstyle{remark}
\newtheorem{remark}[theorem]{Remark}
\numberwithin{equation}{section}
\newcommand{\intav}[1]{\mathchoice {\mathop{\vrule width 6pt height 3 pt depth  -2.5pt
\kern -8pt \intop}\nolimits_{\kern -6pt#1}} {\mathop{\vrule width
5pt height 3  pt depth -2.6pt \kern -6pt \intop}\nolimits_{#1}}
{\mathop{\vrule width 5pt height 3 pt depth -2.6pt \kern -6pt
\intop}\nolimits_{#1}} {\mathop{\vrule width 5pt height 3 pt depth
-2.6pt \kern -6pt \intop}\nolimits_{#1}}}
\begin{document}

\begin{abstract}  We study transmission problems with free interfaces from one random medium to another. Solutions are required to solve distinct partial differential equations, $\mbox{L}_{+}$ and $\mbox{L}_{-}$, within their positive and negative sets respectively. A corresponding flux balance from one phase to another is  also imposed. We establish existence and $L^{\infty}$ bounds of solutions. We also prove that variational solutions are non-degenerate and develop the regularity theory for solutions of such free boundary problems.

\medskip

\noindent \textbf{Keywords:}  Transmission problems, elliptic equations, smoothness properties of solutions, free boundary theory.
\medskip

\noindent \textbf{AMS Subject Classifications:} 35R35, 35B65.

\end{abstract}

\maketitle

\tableofcontents

\section{Introduction}

The study of transmission problems often refers to the analysis of models involving different media and/or different laws in separate subregions. Such problems appear in several fields of physics, biology, material sciences, industry, etc. It is particularly relevant for mathematical models associated with composite materials. Electromagnetic (or thermodynamic) processes with different dielectric constants (or heat conductivity) are also typical examples of transmission problems. A large literature on this class of problems evolved from the earliest works back in the 1960's. For a detailed historical reference on the development of transmission problems through the decades, we suggest to the readers the recent book \cite{book_B}.  

\medskip

Classical equations related to the mathematical analysis of transmission problems involve discontinuous coefficients. This is due to different properties and distinct features of the media: devices made of different materials, vibrating folded membranes,  multi-structures bodies, anti-plane shear deformation, etc.  Simple mathematical models convert into  the analysis of  second order elliptic equations of the type
\begin{equation}\label{introEq01}
	\nabla \cdot  (a(x) \nabla u) = 0, \quad \text{in } D,
\end{equation}
where
\begin{equation}\label{introEq02}
	a(x) = \left \{ 
		\begin{array}{lll}
		a_0 & \text{in}& D_0 \Subset D\\
		a_1 & \text{in}& D \setminus D_0,
		\end{array}
	\right.
\end{equation}
for some subdomain $D_0$ of $D$ and  $0 < a_0 \not = a_1 < \infty$. For these type of problems, the prior knowledge of $\partial D_0$ as well as its smoothness and geometric properties are essential in their studies. The rigorous mathematical analysis of equations of the form described above, \eqref{introEq01}, \eqref{introEq02}, promoted the development of a number of deep ideas and new mathematical tools,  \cite{LV, LN, BLY, BCN, Dong} just to cite a few recent ones.

\begin{figure}[h!]
\centering
\scalebox{1}  
{
\begin{pspicture}(0,-3.16)(10.267399,3.16)
\definecolor{color37b}{rgb}{0.8,0.8,0.8}
\psbezier[linewidth=0.04](0.6219097,-0.30659932)(0.0,0.4991765)(0.42978415,2.8862627)(2.5815902,3.0131314)(4.733396,3.14)(10.247399,-0.30659932)(6.981265,-1.7232996)(3.7151308,-3.14)(4.001781,-0.07037421)(3.0811167,-0.03171717)(2.160452,0.006939872)(1.2438194,-1.1123751)(0.6219097,-0.30659932)
\pscircle[linewidth=0.04,dimen=outer,fillstyle=solid,fillcolor=color37b](5.5573993,-0.19){1.01}
\usefont{T1}{ptm}{m}{n}
\rput(6.1988544,-1.275){$D_0$}
\usefont{T1}{ptm}{m}{n}
\rput(8.398854,-0.095){$D$}
\usefont{T1}{ptm}{m}{n}
\rput(5.6288543,-0.155){$\mbox{medium } a_0$}
\usefont{T1}{ptm}{m}{n}
\rput(3.0188544,0.995){$\mbox{medium }a_1$}
\end{pspicture} 
}
\vspace{-0.8cm}
\caption{Typical transmission problem}
\end{figure}
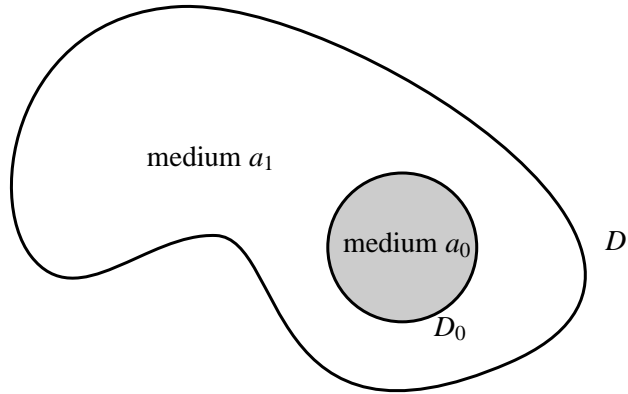

Let us now examine another simple physical situation, for which the transmission problem involved is rather more delicate from the mathematical view point. Consider the system of equations modeling an ice that melts submerged in a heated inhomogeneous medium. Let us focus on the stationary situation -- a snapshot of the phenomenon. When the temperature $T$ is negative, the heat conduction process is governed by the diffusion operator associated to the ice. Thus, after normalization, we can assume that 
\begin{equation}\label{introEq03}
	\Delta T = 0, \ \text{ is satisfied inside the ice}.
\end{equation}
For positive temperatures, the heat conduction process is now ruled by the diffusion operator associated to the exterior medium. Thus,  
\begin{equation}\label{introEq04}
	\nabla \cdot (\bar{a} (x) \nabla T) = 0, \text{ in the exterior fluid,}
\end{equation}
for some $\bar{a}(x)$ positive definite matrix different from the identity matrix.  From the classical laws of thermodynamics,  an extra energy (latent heat) is required for the change of state of the matter. This translates into a prescribed heat flux balance along the phase transition $\{T=0\}$. One finds out the equation 
\begin{equation}\label{introEq05}
	\partial T^{-} - \partial_{\bar{a}} T^{+} = c_0, \ \text{along } \{T=0\},
\end{equation}
for some "latent heat for melting" constant $c_0  \not = 0$.

\begin{figure}[h!]
\centering
\scalebox{1} % Change this value to rescale the drawing.
{
\begin{pspicture}(0,-3.59)(9.819823,3.59)
\definecolor{color20b}{rgb}{0.8,0.8,0.8}
\psbezier[linewidth=0.04,fillstyle=solid,fillcolor=color20b](1.7914643,2.6692548)(2.8314817,3.57)(7.3501773,3.501589)(8.426516,2.657909)(9.502853,1.814229)(9.799823,-2.0989447)(8.581462,-2.7389061)(7.3631015,-3.3788674)(2.6329556,-3.57)(1.3164778,-2.616103)(0.0,-1.6622058)(0.7514471,1.7685097)(1.7914643,2.6692548)
\psbezier[linewidth=0.04,fillstyle=solid](2.4640794,1.8852576)(2.9068406,2.43)(3.084843,1.9615359)(4.4140916,1.9760482)(5.74334,1.9905604)(6.016862,2.3960457)(6.529049,1.9674678)(7.0412354,1.5388899)(6.6844625,1.1848748)(6.6566052,0.069449894)(6.628748,-1.045975)(7.199823,-1.4854436)(6.576579,-1.8523877)(5.9533353,-2.2193315)(5.7791,-1.8501184)(4.3862343,-1.9019986)(2.9933684,-1.9538789)(2.633442,-2.1743698)(2.2966325,-1.8189514)(1.959823,-1.4635329)(2.4640794,-0.9551846)(2.5058653,0.1861804)(2.5476513,1.3275454)(2.0213182,1.3405153)(2.4640794,1.8852576)
\usefont{T1}{ptm}{m}{n}
\rput(4.6212783,0.335){$\mbox{Ice}$}
\usefont{T1}{ptm}{m}{n}
\rput(2.9812782,-1.545){$T<0$}
\usefont{T1}{ptm}{m}{n}
\rput(4.501278,2.575){$\mbox{Exterior fluid}$}
\usefont{T1}{ptm}{m}{n}
\rput(7.901278,2.395){$T>0$}
\usefont{T1}{ptm}{m}{n}
\rput(4.491278,1.415){$\Delta T = 0$}
\usefont{T1}{ptm}{m}{n}
\rput(5.6412783,-2.485){$\nabla \cdot (\bar{a}(x)\nabla T) = 0$}
\psline[linewidth=0.04cm,arrowsize=0.05291667cm 2.0,arrowlength=1.4,arrowinset=0.4]{->}(6.639823,0.23)(7.739823,1.33)
\usefont{T1}{ptm}{m}{n}
\rput(8.211278,1.535){$\partial T^{-}$}
\psline[linewidth=0.04cm,arrowsize=0.05291667cm 2.0,arrowlength=1.4,arrowinset=0.4]{->}(6.639823,0.23)(5.419823,-0.37)
\usefont{T1}{ptm}{m}{n}
\rput(5.401278,-0.665){$\partial_{\bar{a}} T^{+}$}
\end{pspicture} 
}\caption{Ice melting within an exterior medium}
\end{figure}
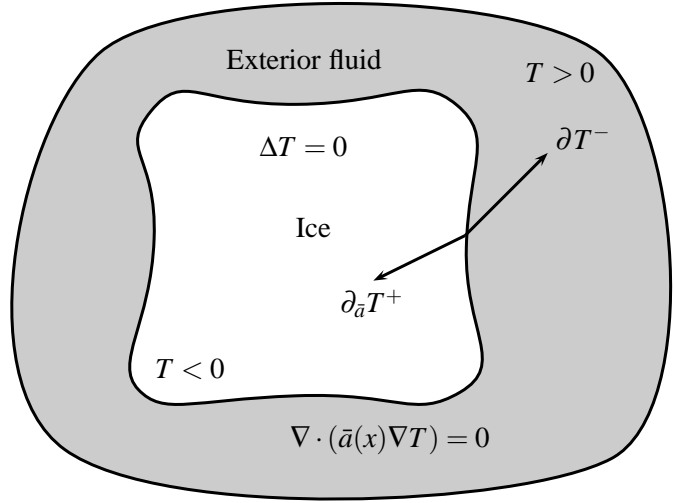

Albeit similar in nature, two crucial differences ought to be highlighted from the typical transmission problem and the ice melting situation described above. First notice that in the latter,  the transmission boundary is {\it a priori} unknown, and in fact depends on the solution itself. No smoothness properties on $\{T=0\}$ are {\it a priori} granted  and, in general, it may fail to be a Lipschitz surface. This is in contrast to the fact that, say, in composite body problems,  the discontinuity transmission occurs along a fixed, smooth prescribed boundary, $\partial D_0$. The second different feature of the ice melting problem rests upon the flux balance, entitled by equation \eqref{introEq05}. Unifying  the equations involved in the system, one formally ends up with a nonhomogeneous elliptic equation with a measure datum:
\begin{equation}\label{introEq06}
	\nabla \cdot  ( \mathcal{A}(x) \nabla T ) = \mu,  
\end{equation}
where $\mathcal{A}(x) = \text{Id}$ for $T<0$, $\mathcal{A}(x) = \bar{a}(x) $ for $T>0$, and $\mu$ is a nonzero measure supported along the free interface $\{ T = 0 \}$, in particular $\mu$ is not absolutely continuous with respect to the $\Leb$-Lebesgue measure. Should $\{T=0\}$ be an  $(n-1)$ smooth surface, then $\mu = c_0 \lfloor \{T=0\}$ in the sense of measures. 

\medskip 

The main goal of present work is to provide a mathematical treatment to transmission problems involving free boundaries of discontinuity, as the ice melting example illustrated above. For that, let us start describing the mathematical set-up we shall work on in this paper. Let $\Omega$ be a bounded open subset of $\R^n, ~n \ge 3$ with Lipschitz
boundary and $A_+$, $A_-\in L^\infty(\Omega,{\rm Sym}(n\times n))$ be functions of symmetric matrices satisfying the ellipticity condition
 \begin{equation}\label{elliptic}
	\lambda|\xi|^2\le\langle A_\pm(x)\,\xi,\xi\rangle \le \Lambda|\xi|^2,
\end{equation}
 for a.e.  $x\in\Omega$ and all $\xi\in\R^n$, where  $0<\lambda \le \Lambda$ are fixed constants. Let also  $\gamma \colon \Omega \to \mathbb{R}$ be a continuous, strictly positive  function and  $0< \lambda_-<\lambda_+<\infty$ be different constants.  Finally select sources functions $ f_{+}, f_{-} \in L^{q}(\Omega)$, with $ q > n/2$. 
\par

\medskip 

Given a fixed boundary datum  $\phi \in H^1(\Omega)\cap L^\infty(\Omega)$, we consider the  energy functional 
${\mathcal{F}} = {\mathcal{F}}_{A_\pm, \lambda_\pm,f_\pm} \colon  H^1_{\phi}(\Omega)\rightarrow [0,+\infty)$ 
defined by 
\begin{equation} \label{F}
	\begin{array}{lll}
	 	{\mathcal{F}}_{A_\pm, \lambda_\pm,f_\pm}(v)&:= & \displaystyle  \int\limits_{\Omega \cap\{ v>0\}} \left \{\dfrac{1}{2} |\nabla v(x)|^2_{A_+} + \lambda_{+}(x) + f_{+}v \right \}  \,dx \\
		&+&
		\displaystyle  \int\limits_{\Omega \cap\{ v\le0\}} \left \{\dfrac{1}{2}|\nabla v(x)|^2_{A_-}  + \lambda_{-}(x) + f_{-}v \right \}\,dx,
	\end{array}
\end{equation}
where we use the notation
$$|\nabla v(x)|^2_{A_\pm}:=
\langle A_\pm (x)\,\nabla v(x),\nabla v(x)\rangle, 
$$
and 
$$
	\lambda_+(x):= \gamma(x)\lambda_+ \quad \mbox{and} \quad \lambda_-(x):= \gamma(x)\lambda_{-},
$$  
As usual $H^1_{\phi}(\Omega)$ denotes  the functional affine manifold
$$ 
	H^1_{\phi}(\Omega):=\{v\in H^1(\Omega) \suchthat  v-\phi\in H^1_0(\Omega)\}.
$$

\medskip 

Some notation simplifications shall be used throughout the paper. Often we shall  omit the subscripts of the energy functional.  We will simply write ${\mathcal{F}}(v)$ when the choices of $A_\pm,\lambda_\pm,f_\pm$ are understood in the context. It will be convenient to write ${\mathcal{F}_A}(v)$ when  $A_+ = A_- = A$. Throughout the text , it will also be useful to write the functional \eqref{F} as follows  
\begin{equation} \label{F1} \tag{P}
\mathcal{F}(u) = \int\limits_{\Omega}  \left \{ \dfrac{1}{2}\langle A(x,u) \nabla u, \nabla u \rangle + F_0(u) + fu \right \} dx
\end{equation}
where 
\begin{eqnarray}
	 A(x,u)&:=&  A_+\chi_{\{u>0\}} + A_-\chi_{\{u\le0\}} \label{A(x,u)} \\
	F_0(u) &=& \lambda_{+} \chi_{\{u>0\}} + \lambda_{-} \chi_{\{u \le 0\}} \label{F(u)} \\
	f&=&f_{+}\chi_{\{u>0\}} + f_{-}\chi_{\{u\le 0\}}. \label{f}
\end{eqnarray}

\medskip 

Extrema functions of the energy $\mathcal{F}$ are related to free transmission problems earlier illustrated. Indeed, it is expected that local minima of $\mathcal{F}$ satisfy
$$ 
	 \nabla \cdot (A_+(x) \nabla u(x)) = f_{+}(x) \  \mbox{in} \ \{u \ > 0\}
$$
and analogously, one should verify that
$$ 
	 \nabla \cdot  (A_-(x) \nabla u(x)) = f_{-}(x) \  \mbox{in} \ \{u \le 0\}^\circ.
$$
An appropriate free boundary condition will also emerge from small domain variations near the  free discontinuity surface $\partial \{u> 0 \}$, see Section \ref{sct FBC}. Hence, the mathematical treatment of free transmission problems leads naturally to the study of local minima of the energy functional $\mathcal{F}$ --  this is the goal of this present work. 

\medskip 

It is as well alluding to observe that the energy functional $ {\mathcal{F}}_{A_\pm, \lambda_\pm,f_\pm}$ described in \eqref{F} can also be viewed as a generalization of the Alt-Caffarelli-Friedman theory, developed henceforth the epic-marking paper \cite{ACF}. For that, one simply take $A_{+} = A_{-} = \text{Id}$, $f_{+} = f_{-} = 0$. Sometimes it will be convenient to explore such a perspective, confronting the results obtained in \cite{ACF} with the ones proven herein for the general free transmission problem. 
\medskip

Before continuing, let us declare that any constant or entity that depends only upon dimension, ellipticity constants of the media entitled in \eqref{elliptic}, $\gamma$, $\lambda_{+}$, $\lambda_{-}$, $\|f_{+}\|_q$, $\|f_{-}\|_q$, $\Omega$ and $\phi$, shall be called universal. Alternatively, we will say that a constant depends only upon the data of the problem.

\medskip

Discontinuities of the media along the free interface impel several new subtleness and technical difficulties in the mathematical treatment of this type of problems.  We highlight that existence of a minimizer {\it does not} follow from classical methods in the Calculus of Variations. The main difficulty is the lack of convexity of the functional \eqref{F}. We exemplify that at the beginning of Section \ref{ELIB}. 

\medskip 

Despite of the lack of convexity, we shall prove the functional \eqref{F} has a minimizer, Theorem \ref{ET}. We will also obtain an $L^\infty$ estimate of a minimum, that depends only upon the data of the problem. These two results will be delivered in Section \ref{ELIB}. 

\medskip

In Section \ref{HE}, we refine our estimates as to prove that any local minima of the functional $\mathcal{F}$ has a universal modulus of continuity, Theorem \ref{reg}. This will be a key tool in the study of further qualitative properties of minima. In particular, Theorem \ref{reg} provides compactness of minima, which plays a decisive role in the perturbative approach we shall establish henceforth.
\medskip

In the sequel, we start developing a geometric regularity theory for local minima. We show that such extrema grow linearly away from the transmission interface, Theorem \ref{CL}. As a consequence, we establish strong nondegeneracy properties of minima along the phase transition, Theorem \ref{TNDF}. These results are carried out in Section \ref{GN}.

\medskip

The next step in the program is to analyze the optimal asymptotic regularity theory for minima of  when $A_{+} = A_{-} = A(x)$ is a mere continuous function of symmetric elliptic matrices. We recall that in such a scenario, even solutions to the homogeneous equation
\begin{equation} \label{hom_int_FHR}
    \div (A(x) \nabla u) = 0
\end{equation}
may fail to be Lipschitz continuous, see for instance \cite{JMV}. Notwithstanding, we shall deliver in Section \ref{FHR} an asymptotic optimal $C^{0,1^{-}}$ regularity estimate for minima of
$\mathcal{F}_{A, f_{+}, f_{-}}$ for $f_{-}, f_{+} \in L^q$, $q\ge n$. As to further enlighten the subtleness of the asymptotic estimate we shall deliver in Section \ref{FHR}, we comment that if $A$ had $\beta$-H\"older continuous coefficients, $0< \beta< 1$, then for the homogeneous equation \eqref{hom_int_FHR}  classical Schauder estimates provide $C^{1,\beta}$ smoothness of solutions. In parallel, under H\"older continuity of the coefficients, it is possible to establish a monotonicity formula for  $\mathcal{F}_{A,0,0}$, see \cite[Lemma 1]{C3}, and hence minima are Lipschitz continuous.

\medskip

Before presenting the ultimate asymptotic Lipschitz regularity estimate we shall prove in Section \ref{sct small jumps}, we invite the readers to revisit the ice-melting problem previously illustrated at the beginning of this Introduction. One should notice that while the ice in melting inside a different medium, a mixing process is taking place near the free boundary. The water coming from the  melted ice associates with the exterior medium, turning its heat properties closer and closer to the heat properties of the pure water. In mathematical terms, when one looks at the evolutionary problem, the exterior medium $A_{+}$ depends on the time parameter $t$ and we verifies that
\begin{equation} \label{mot intro}
    \lim\limits_{t \to \infty} \|A_{-} - A_{+}(t) \| = 0,
\end{equation}
near the moving free interface $\partial \{ T(x,t) > 0 \}$. That is, after some time, the jumping discontinuity from one medium to another should be no more than $\epsilon$, a given choice of closeness, $\epsilon>0$.

\medskip

With this illustrative physical intuition in mind, we shall prove in our final Theorem \ref{HoR} that, under $\epsilon$-smallness of the jump discontinuity, solutions are locally of class $C^{0,\alpha(\epsilon)}$ and
$$
    \lim\limits_{\epsilon \to 0}  \alpha(\epsilon) = 1.
$$
That is, solutions to the free transmission problem are asymptotically Lipschitz continuous as $t \to \infty$. It is worth commenting that even in the case that $A_{+}$ and $A_{-}$ are (different) constant elliptic matrices, one cannot establish monotonicity formula for the homogeneous $\mathcal{F}_{A_{+}, A_{-}, 0,0}$ functional (cf. \cite{CS}). As a consequence, even in the simplest heterogeneous scenario, Lipschitz estimates may not be valid for local minima. In that perspective, the asymptotic estimate proven herein  is of optimal nature.

\medskip

{\it Acknowledgements.} The authors would like to thank Enrico Valdinoci and Luis Caffarelli for the their caring interest on this program and for fruitful discussions. The authors researches have been partially funded by CNPq-Brazil.

%%%%%%%%%%%%%%%%%%%%%%%%%%%%%%
\section{Existence and $L^\infty$ bounds} \label{ELIB}

In this section we establish existence of a minimum to the functional \eqref{F}  as well  as universal $L^\infty$ bounds for such an extremum. As previously mentioned, the functional \eqref{F} fails, in general, to be convex. To exemplify this we invite the readers to analyze  the following one-dimensional example. Let $\Omega=(0,10), ~A_+ \equiv 20$ and $A_- \equiv 1$, $\lambda_{+} = \lambda_{-} = f_{+} = f_{-} = 0$. Define the functions $u$ and $v$ by the graphs below:
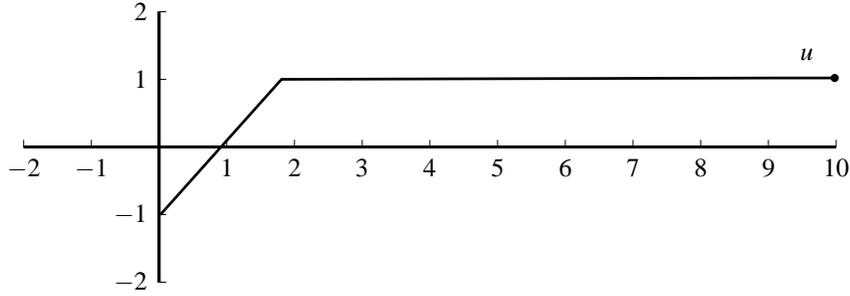
\begin{figure}[h!]
\centering
\scalebox{0.9} % Change this value to rescale the drawing.
{
\begin{pspicture}(0,-2.0)(12.06,2.0)
\rput(2.0,0.0){\psaxes[linewidth=0.04,ticksize=0.10583333cm](0,0)(-2,-2)(10,2)}
\psline[linewidth=0.04](2.02,-1.0)(3.8083212,1.0)(11.144088,1.02)(12.02,1.02)
\usefont{T1}{ptm}{m}{n}
\rput(11.571455,1.365){$u$}
\psdots[dotsize=0.12](11.98,1.02)
\end{pspicture} 
}
\caption{Graph of competing function $u$.}
\end{figure}

%%%%%%%%%%%%%%%

\begin{figure}[h!]
\centering
\scalebox{0.9} % Change this value to rescale the drawing.
{
\begin{pspicture}(0,-2.0)(12.12,2.0)
\rput(2.0,0.0){\psaxes[linewidth=0.04,ticksize=0.10583333cm](0,0)(-2,-2)(10,2)}
\usefont{T1}{ptm}{m}{n}
\rput(11.511456,1.265){$v$}
\psline[linewidth=0.04](2.0,-1.0)(3.02,-0.02)(3.98,-1.02)(5.02,0.0)(6.06,-1.06)(7.02,-0.04)(8.0,-0.98)(9.0,-0.02)(10.02,-1.0)(12.02,1.0)(12.02,1.0)
\psdots[dotsize=0.12](12.04,1.0)
\end{pspicture} 
}

\caption{Graph of competing function $v$.}
\end{figure}

Notice that both functions take the same value on the boundary of $\Omega$. The corresponding functional  
$$
	\mathcal{G}(\psi) = 20\int\limits_{ \chi_{\{\psi>0\}\cap \Omega}}(\psi')^2 dx+ \int\limits_{\chi_{\{ \psi \le0\}\cap \Omega}}(\psi')^2dx.
$$ 
is not convex. Indeed, one easily compute:
$$
	\mathcal{G}\Bigl(\dfrac{u+v}{2}\Bigr) = 1+8\cdot \dfrac{1}{4}\cdot20=41 \ge \dfrac{1}{2}21 + \dfrac{1}{2}29 =\dfrac{1}{2}\mathcal{G}(u) +\dfrac{1}{2} \mathcal{G}(v).
$$ 
It is clear from the construction above that, if desired, one can wig the functions as to fail the concavity inequality as well.  

Combining methods from the Calculus of Variations with theoretical measure estimates, we can, nonetheless, show the existence of a local minima, see for instance \cite{T-1, A} for similar approaches. 

\begin{theorem}[Existence theorem] \label{ET}
There exists a minimum $u_0$ to the energy functional \eqref{F}. 
\end{theorem}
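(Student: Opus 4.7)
The plan is to apply the direct method of the Calculus of Variations; since convexity of $\mathcal{F}$ fails, the main effort will be in the lower semicontinuity step. First I would establish coercivity. By the ellipticity condition \eqref{elliptic}, positivity of $\gamma$ and $\lambda_{\pm}$, decomposing $v=(v-\phi)+\phi$ with $v-\phi\in H^1_0(\Omega)$, and applying Poincar\'e, H\"older, and Sobolev (since $q>n/2$ guarantees $q'<2^{*}$ for $n\ge 3$) together with Young's inequality, one obtains a universal lower bound
$$
\mathcal{F}(v)\;\ge\;c_0\,\|v\|_{H^1(\Omega)}^{2}\;-\;C_0.
$$
Any minimizing sequence $\{u_k\}\subset H^1_{\phi}(\Omega)$ is therefore bounded in $H^1$; passing to a subsequence, $u_k\rightharpoonup u_0$ weakly in $H^1(\Omega)$, strongly in $L^{p}(\Omega)$ for every $p<2^{*}$, and a.e.\ in $\Omega$. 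Weak closedness of $H^1_0(\Omega)$ places $u_0\in H^1_{\phi}(\Omega)$, so $u_0$ is the candidate minimizer.

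The heart of the proof is to show $\mathcal{F}(u_0)\le \liminf_k \mathcal{F}(u_k)$, and the subtle term is the Dirichlet energy because $A(x,v)$ jumps in $v$ across $\{v=0\}$. Given $\eta,\delta>0$, Egorov's theorem yields a set $K_\delta\subset \Omega$ with $|\Omega\setminus K_\delta|<\delta$ on which $u_k\to u_0$ uniformly. On
$$
E^{+}_{\eta,\delta}:=\{u_0>\eta\}\cap K_\delta,\qquad E^{-}_{\eta,\delta}:=\{u_0<-\eta\}\cap K_\delta,
$$
the sign of $u_k$ stabilizes for $k$ large, so $A(x,u_k)=A_{+}$ on $E^{+}_{\eta,\delta}$ and $A(x,u_k)=A_{-}$ on $E^{-}_{\eta,\delta}$. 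Dropping the rest of $\Omega$ (nonnegative integrand) and invoking weak lower semicontinuity of each strictly convex quadratic form $w\mapsto \int |w|^{2}_{A_{\pm}}$ on $L^{2}$,
$$
\liminf_k\int_\Omega |\nabla u_k|^{2}_{A(x,u_k)}\,dx \;\ge\; \int_{E^{+}_{\eta,\delta}} |\nabla u_0|^{2}_{A_{+}}\,dx + \int_{E^{-}_{\eta,\delta}} |\nabla u_0|^{2}_{A_{-}}\,dx.
$$
Sending $\delta\downarrow 0$ and then $\eta\downarrow 0$, monotone convergence produces the same inequality with $E^{\pm}_{\eta,\delta}$ replaced by $\{\pm u_0>0\}$; Stampacchia's lemma ($\nabla u_0=0$ a.e.\ on $\{u_0=0\}$) then identifies the right-hand side with $\int_\Omega |\nabla u_0|^{2}_{A(x,u_0)}\,dx$.

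The remaining two terms are easier. Since $u_k(x)\to u_0(x)$ a.e., the pointwise bound $\liminf_k \chi_{\{u_k>0\}}(x)\ge \chi_{\{u_0>0\}}(x)$ holds, so Fatou's lemma, together with $\lambda_{+}>\lambda_{-}$ and $\gamma>0$, gives $\liminf_k \int F_0(u_k)\,dx\ge \int F_0(u_0)\,dx$. For the source term, $u_k^{\pm}\to u_0^{\pm}$ strongly in $L^{q'}(\Omega)$ (since $t\mapsto t^{\pm}$ is $1$-Lipschitz and $q'<2^{*}$), so H\"older's inequality yields convergence of $\int fu\,dx=\int f_{+}u^{+}- f_{-}u^{-}\,dx$. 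Summing the three contributions produces $\mathcal{F}(u_0)\le \liminf_k \mathcal{F}(u_k)=\inf_{H^1_{\phi}}\mathcal{F}$, as desired. The main obstacle throughout is precisely the sign-interface $\{u_0=0\}$ where $A(x,\cdot)$ jumps, and the tandem of Egorov on stable-sign subregions together with Stampacchia's null-gradient lemma on the contact set is what circumvents the absence of convexity.
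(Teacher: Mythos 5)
Your proposal is correct and follows essentially the same route as the paper: coercivity via ellipticity, Poincar\'e, Sobolev and H\"older; extraction of a weakly convergent minimizing sequence; Egorov to produce a set of almost-full measure with uniform convergence; restriction to subsets where $u_0$ stays uniformly away from $0$ so that the coefficient matrix along the sequence is frozen to $A_\pm$, enabling ordinary weak lower semicontinuity of the quadratic forms; and exhaustion of $\{\pm u_0>0\}$ in the two parameters, with the $F_0$ and source terms handled by Fatou and strong $L^{q'}$ convergence. The only cosmetic differences are that you invoke Stampacchia's null-gradient lemma explicitly to absorb the level set $\{u_0=0\}$, whereas the paper uses it tacitly by writing $\{u_m<0\}$ versus $\{u_0\le 0\}$, and that the paper passes through the intermediate inclusion $\{u_0>\delta\}\cap\Omega_\varepsilon\subset\{u_m>\delta/2\}$ rather than your direct sign-stabilization phrasing.
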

\begin{proof}
For this proof, it will be convenient to write the functional $\mathcal{F}$ as indicated in \eqref{F1}. From ellipticity of the matrices $A_{+}$ and $A_{-}$ we readily obtain the lower bound
\begin{equation} \label{EM1}
	\mathcal{F}(u) \ge \dfrac{\lambda}{2} \int\limits_{\Omega} (| \nabla u|^{2} + 2fu) dx.
\end{equation} 
Clearly, $q> n/2 > 2n/(n+2)$, thus by H\"older inequality we can estimate
\begin{equation} \label{EM2}
	\left |  \int\limits_{\Omega} fu dx \right |   \le  C_1(n,\Omega) \|f\|_{L^{q}(\Omega)} \| u\|_{L^{2n/(n-2)}(\Omega)}.
\end{equation}
Hence, combining  \eqref{EM1} and \eqref{EM2}, together with Sobolev embedding and Poincar\'e inequality, yields
\begin{equation} \label{EM22}
\mathcal{F}(u) \ge \dfrac{\lambda}{2} \int\limits_{\Omega} | \nabla u|^{2}dx - C(n,\lambda,\|f\|_{L^q(\Omega)}, \phi, \Omega) \left ( \int\limits_{\Omega} | \nabla u|^{2}dx \right )^{1/2} - C(\phi).
\end{equation}
We have verified that  the functional $\mathcal{F}$ is coercive along  $H^{1}_{\phi}(\Omega)$ and thus
\begin{equation}\label{EM29}
	\inf \left  \{\mathcal{F}(u) \suchthat  u \in H^{1}_{\phi}(\Omega) \right \} > - \infty.
\end{equation}	
  
In the sequel, let $\{u_m\}_{m\ge 1} \subset H_\phi^{1}(\Omega)$ be a minimizing sequence for $\mathcal{F}$. Passing to a subsequence, if necessary, we can assume that $u_m \rightarrow u_0$ a.e in $\Omega$ and $\nabla u_m \rightharpoonup \nabla u_0$ in $ L^2(\Omega)$. From Egoroff's theorem, for any $\varepsilon >0 $ there is an open subset $\Omega_\varepsilon \subset \Omega$, with  $|\Omega \backslash \Omega_\varepsilon|< \varepsilon$, for which we can assure $u_m \rightarrow u_0$ uniformly in $\Omega_\varepsilon$. Now, given $\delta > 0$, 
we estimate
\begin{eqnarray*}
\int\limits_{ \Omega_\varepsilon \cap \{u_0> \delta\}} \dfrac{1}{2}\langle A(x,u_0) \nabla u_0, \nabla u_0 \rangle dx &=& \hspace{-0.3cm}
\int\limits_{ \Omega_\varepsilon \cap \{u_0> \delta\}} \dfrac{1}{2}\langle A_+ \nabla u_0, \nabla u_0 \rangle dx \\
&\le& \liminf_{m \rightarrow \infty}  \hspace{-0.3cm} \int\limits_{ \Omega_\varepsilon \cap \{u_0> \delta\}} \dfrac{1}{2}\langle A_+ \nabla u_m, \nabla u_m \rangle dx \\
&\le& \liminf_{m\rightarrow \infty}  \hspace{-0.3cm}  \int\limits_{ \Omega_\varepsilon \cap \{u_m> \delta / 2\}} \dfrac{1}{2}\langle A_+ \nabla u_m, \nabla u_m \rangle dx \\
&\le& \liminf_{m \rightarrow \infty}  \hspace{-0.3cm}  \int\limits_{ \Omega_\varepsilon \cap \{u_m > 0\}} \dfrac{1}{2}\langle A_+ \nabla u_m, \nabla u_m \rangle dx \\ 
&\le& \liminf_{m \rightarrow \infty}  \hspace{-0.3cm} \int\limits_{\Omega \cap  \{u_m > 0\}} \dfrac{1}{2}\langle A_+ \nabla u_m, \nabla u_m \rangle dx \\ 
&=& \liminf_{m \rightarrow \infty}  \hspace{-0.3cm}  \int\limits_{\Omega \cap  \{u_m > 0\}} \dfrac{1}{2}\langle A(x,u_m) \nabla u_m, \nabla u_m \rangle dx. \\
\end{eqnarray*}
Letting $\delta \to 0$  in the above chain of inequalities yields
\begin{equation} \label{epsilon}
\int\limits_{ \Omega_\varepsilon \cap \{u_0> 0\}}   \hspace{-0.3cm}  \langle A(x,u_0) \nabla u_0, \nabla u_0 \rangle dx \le \liminf_{m \rightarrow \infty} \int\limits_{\Omega \cap \{u_m > 0\}}  \hspace{-0.3cm} \langle A(x,u_m) \nabla u_m, \nabla u_m \rangle dx. 
\end{equation}
On the other hand, by ellipticity and $L^2$ bounds on $\nabla u_0$, we have
\begin{equation}\label{est remain}
	 \int\limits_{ (\Omega \backslash \Omega_\varepsilon) \cap \{ u_0 > 0\}} \langle A(x,u_0) \nabla u_0, \nabla u_0 \rangle dx = \text{O}(\varepsilon).
\end{equation}
Hence, combining \eqref{epsilon} and \eqref{est remain} and, afterwards letting $\varepsilon \to 0$,  gives
\begin{equation} \label{EM3}
\int\limits_{\Omega \cap \{u_0> 0\}} \langle A(x,u_0) \nabla u_0, \nabla u_0 \rangle dx \le \liminf_{m \rightarrow \infty} \int\limits_{ \Omega \cap \{u_m > 0\}} \langle A(x,u_m) \nabla u_m, \nabla u_m \rangle dx.
\end{equation}
Reasoning analogously, we obtain  
\begin{equation} \label{EM4}
\int\limits_{\Omega \cap \{u_0 \le 0\}} \langle A \nabla u_0, \nabla u_0 \rangle dx \le \liminf_{m \rightarrow \infty} \int\limits_{\Omega \cap \{u_m < 0\}} \langle A \nabla u_m, \nabla u_m \rangle dx.
\end{equation}
Lower weak semicontinuity of the other terms of $\mathcal{F}$ is standard. That is, that
\begin{equation} \label{EM5}
\int\limits_{\Omega}    fu_0 dx \le \liminf_{m \rightarrow \infty} \int\limits_{\Omega}   fu_m dx
\end{equation}
and
\begin{equation} \label{EM6}
\int\limits_{\Omega}F_0(u_0) dx \le \liminf_{m \rightarrow \infty} \int\limits_{\Omega} F_0(u_m) dx,
\end{equation}
are classical facts. From (\ref{EM3}), (\ref{EM4}), (\ref{EM5}) and (\ref{EM6}), it follows that the limiting function $u_0$ is a minimum of $\mathcal{F}$ and Theorem \ref{ET} is concluded.
\end{proof}

In the sequel we establish a universal control on the $L^\infty$ norm of the minimum granted by Theorem \ref{ET}. Such an estimate will play an important role in higher order estimates we shall prove later in the program.

\begin{theorem}[$L^\infty$ bounds] \label{LIB} There exists a  constant $C>0$ depending only upon dimension,  ellipticity constants $\lambda$ and $\Lambda$, and the numbers $\lambda_{+}$, $\lambda_{-}$, $\|\lambda(x)\|_\infty$, $\| \phi\|_{L^{\infty}(\partial \Omega)}$ and  $\|f\|_{L^{q}(\Omega)}$ such that
\begin{equation}
\| u_0 \|_{L^{\infty}(\Omega)} \le C,
\end{equation}
holds true for any local minimum $u_0$ of the functional $\mathcal{F}$.
\end{theorem}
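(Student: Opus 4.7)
The plan is to establish upper and lower $L^\infty$ bounds via the De Giorgi--Stampacchia truncation method, exploiting the fact that a \emph{sign-preserving} truncation of $u_0$ cancels all phase-dependent contributions in the minimality comparison. I would treat the upper bound in detail; the lower bound follows by the symmetric argument on the negative phase.

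Let $M_0 := \|\phi\|_{L^{\infty}(\Omega)}$ and, for every $k \ge M_0$, introduce the competitor $v_k := \min(u_0, k) = u_0 - (u_0-k)^+$. Since $k \ge M_0$ dominates the trace $\phi$, one has $v_k \in H^1_\phi(\Omega)$. The essential observation is that $k > 0$ forces $\{v_k > 0\} = \{u_0 > 0\}$ and $\{v_k \le 0\} = \{u_0 \le 0\}$, so the phase-dependent quantities $A(x,u_0) = A(x,v_k)$, $F_0(u_0) = F_0(v_k)$ and the source $f$ coincide pointwise. Writing $A(k) := \{u_0 > k\}$ and using that $\nabla v_k \equiv 0$ on $A(k)$ while $v_k = u_0$ on $\Omega \setminus A(k)$, the inequality $\mathcal{F}(u_0) \le \mathcal{F}(v_k)$ collapses, after cancellation, to
$$
\int_{A(k)} \tfrac{1}{2}\langle A_+ \nabla u_0, \nabla u_0\rangle\, dx \;\le\; \int_{A(k)} f_+\,(k-u_0)\, dx \;\le\; \int_{A(k)} |f_+|\,(u_0 - k)\, dx.
$$
By uniform ellipticity, and setting $w := (u_0 - k)^+ \in H^1_0(\Omega)$, this reduces to $\tfrac{\lambda}{2}\|\nabla w\|_{L^2(\Omega)}^2 \le \|f_+\|_{L^q(\Omega)} \|w\|_{L^{q'}(A(k))}$ with $q' := q/(q-1)$.

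The next step is a standard Stampacchia iteration. Sobolev embedding combined with Hölder's inequality on $A(k)$ produces
$$
\|w\|_{L^{2^*}(\Omega)} \;\le\; C\,\|f_+\|_{L^q(\Omega)}\,|A(k)|^{\delta},
\qquad \delta := \frac{1}{q'} - \frac{1}{2^*},
$$
with $\delta > 0$ since $q > n/2 > 2n/(n+2)$. For any $h > k \ge M_0$, the crude bound $(h-k)\chi_{A(h)} \le w$ yields
$$
|A(h)| \;\le\; \left( \frac{C\,\|f_+\|_{L^q(\Omega)}}{h - k} \right)^{2^*} |A(k)|^{2^*\delta}.
$$
A direct computation shows that $q > n/2$ is precisely equivalent to $2^*\delta > 1$, which is exactly the superlinear decay hypothesis required by Stampacchia's lemma. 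This supplies a universal $k^{\star} \ge M_0$, depending only on the data of the problem, with $|A(k^{\star})| = 0$, i.e.\ $u_0 \le k^{\star}$ almost everywhere in $\Omega$. The mirror argument with $\tilde v_k := \max(u_0, -k)$ preserves the negative phase on $\{u_0 < -k\}$ and drives the analogous iteration using $A_-$ and $f_-$, giving the lower bound.

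The main subtlety here is conceptual rather than computational: the phase dependence of $A$ and $F_0$ could, a priori, spoil the comparison by introducing uncontrolled discrepancies between the effective coefficients at $u_0$ and at $v_k$. The choice $v_k = \min(u_0, k)$ with $k \ge M_0 > 0$ (and symmetrically for the lower bound) is exactly what neutralizes this concern, since $u_0$ and $v_k$ then inhabit precisely the same phase everywhere in $\Omega$. Once this cancellation is in place, what remains is a single-phase truncation argument driven entirely by the $L^q$-datum, with the structural parameters $\lambda_\pm$ and $\|\gamma\|_\infty$ entering the final universal constant only through the admissibility threshold $M_0$ and the ellipticity bounds.
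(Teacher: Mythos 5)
Your proof is correct and takes essentially the same route as the paper's: both hinge on comparing $u_0$ against a sign-preserving truncation, so that the phase-dependent terms $A(x,\cdot)$, $F_0$, and $f$ cancel identically, reducing the problem to a single-phase De Giorgi--Stampacchia level-set iteration driven by the $L^q$ source. The only cosmetic difference is that you close the iteration via Stampacchia's lemma on the measure decay of $\{u_0>h\}$ (treating the upper and lower bounds separately), whereas the paper truncates both signs simultaneously, derives the Caccioppoli bound $\int_{\mathcal{A}_j}|\nabla u|^2\,dx \le C|\mathcal{A}_j|^{1-2/n+\delta}$, and invokes the equivalent Ladyzhenskaya--Uraltseva machinery.
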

\begin{proof}
Indeed, let $u$ be a minimum of the energy functional entitled in \eqref{F1} and $j_0$  the smallest natural number above $\| \phi\|_{L^{\infty}(\partial \Omega)}$. For each $j \ge j_0$ we define the truncated function $u_j:\Omega \rightarrow \mathbb{R}$ by
$$
u_j = \left\{
\begin{array}{rcl}
j \cdot \mbox{sign}(u),& \mbox{if} & |u| > j\\
u, & \mbox{if} & |u| \le j\\
\end{array}
\right.
$$
where sign$(u)(x) = 1$ if $u(x) > 0, 0$ if $u(x)=0$ and sign$(u)(x) = -1$ if $u(x) < 0$. Denote by
$$
	\A_j : = \left \{x \in \Omega \suchthat  |u(x) | > j \right \}.
$$ 
It follows from the very definition of the above set that for each $j > j_0$,  one verifies
\begin{equation}
u_j = u \quad \mbox{in} \quad \A^{c}_{j} \quad \mbox{and} \quad u_j = j\cdot\mbox{sing}(u) \quad \mbox{in} \quad \A_j.
\end{equation}
Thus, by minimality of $u$, we can estimate
\begin{eqnarray*}
\int\limits_{\A_j} \langle A \nabla u, \nabla u \rangle - \langle A \nabla u_j, \nabla u_j \rangle dx
& \le& \int\limits_{\A_j} f(u_j -u)dx \\
&= & \int\limits_{\A_j\cap \{u>0\}} f(j -u)dx + \int\limits_{\A_j\cap \{u\le0 \}} f(-j -u)dx \\
&\le & 2 \int\limits_{\A_j} |f|(|u| -j) dx.
\end{eqnarray*}
From the range of truncation we consider, it follows that $(|u|-j)^{+} \in W^{1,2}_{0}(\Omega)$. Thus, applying  H\"oder inequality, Gagliardo-Nirenberg inequality and Cauchy's inequality, we obtain
\begin{eqnarray*}
\int_{\A_j} |f|(|u| -j)^{+} dx & \le & \|f\|_{L^{\frac{2^*}{2^*-1}}(\A_j)} \| (|u| - j)^{+} \|_{L^{2^*}(\A_j)} \nonumber \\
 &\le & C\|f\|_{L^{q}(\A_j)} |\A_j|^{(1 -1/2^{*} - 1/q)} \| \nabla u\|_{L^{2}(\A_j)}\nonumber \\
 &\le & \dfrac{C^2}{4 \varepsilon}\|f\|^2_{L^{q}(\A_j)} |A_j|^{2(1 -1/2^{*} - 1/q)}+ \varepsilon \| \nabla u\|^2_{L^{2}(\A_j)} \nonumber\\
 &\le & C| \A_j|^{(1 - 2/n + \delta)} + \varepsilon \| \nabla u \|^{2}_{L^{2}(\A_j)}.
\end{eqnarray*}
In the last inequality,
$$
 \delta = \dfrac{2(2q-n)}{nq}
$$ 
is a positive number since $q>n/2$. Taking, in the sequel, $\varepsilon = \lambda/2$  we obtain, from ellipticity, that
\begin{equation} \label{L11}
\int_{\A_j} | \nabla u |^{2} dx \le C| \A_j|^{1 - (2/n) + \delta}.
\end{equation}
 Now, we employ Poincar\'e inequality followed by  H\"older inequality to establish the control
$$ 
	\|u\|_{L^{1}(\A_{j_0})} \le | \A_{j_0}|^{1/2} \|u\|_{L^{2}(\A_{j_0})} \le C.
$$
Boundedness of $u$ now follows from a general machinery, see for instance \cite[Chapter 2, Lemma 5.3]{Lad}.
\end{proof}

\begin{remark} \label{rmk H1}We conclude this Section commenting that universal $L^{\infty}$ bounds for minimizers of \eqref{F} yield universal control on the $H^{1}$-norm of extrema. Indeed, one simply verifies
\begin{eqnarray*}
\lambda \int_{\Omega} | \nabla u|^{2} dx &\le & \int_{\Omega} \langle A \nabla u, \nabla u\rangle  dx \\
&\le &  \mathcal{F}(\phi) - \int_{\Omega} F_0(u) dx + \int_{\Omega} |f||u|dx\\
 &\le&\mathcal{F}( \phi) + C(n,\lambda_+,\lambda_-,\|\lambda\|_{L^\infty(\overline{\Omega})},\Omega,\|u\|_{L^{\infty}(\Omega)},\|f\|_{L^{q}(\Omega)})\\
 &\le& C.
\end{eqnarray*}
Such a remark will often be used throughout the next Sections.
\end{remark}
\section{Universal H\"older estimates} \label{HE}

In this present Section we shall prove that local minima to \eqref{F} are universally H\"older continuous. We recall the {\it magnum opus} of elliptic regularity theory:   the De Giorgi--Nash--Moser Theorem. It asserts that a weak solution to a divergence form, uniformly elliptic equation
\begin{equation} \label{alphaA}
	\mbox{div}(A(x)\nabla u) = 0
\end{equation}
is locally $\alpha$-H\"older continuous, for a universal exponent $0 < \alpha = \alpha_{ \lambda,\Lambda } < 1$ that depends only upon dimension and ellipticity constants, $0 < \lambda \le \Lambda < \infty$.  As we shall see later in the program, minimizers to the functional $\mathcal{F}$ do satisfy an elliptic equation in the sense of distributions. However the RHS of the equation involves a singular measure supported along the free boundary. Thus, the Euler-Lagrange equation related to $\mathcal{F}$ is of out of the scope of that classical regularity theory and a new approach is required.

Herein we shall use the classical average notation
$$ 
	(f)_{r,x_0} :=  \intav{B_r(x_0)} f dx = \dfrac{1}{|B_r(x_0)|} \int_{B_r(x_0)} f dx.
$$
When the center point is understood, we shall simply write $(f)_r$. Our approach starts off by a standard gradient estimate for elliptic equations in the divergence form, which can be proven by pure energy considerations, see for instance \cite{G_Book}, and we shall omit here.
\begin{lemma} \label{LL}
Let $v$ be a solution of 
$ \mbox{div}(A(x)\nabla v) = 0 $ in a ball $B_R(x_0)\subset \mathbb{R}^n$. Then, for a constant $0< \alpha = \alpha({\lambda,\Lambda})<1$  and for any $0<r \le R$ there holds
$$  
	\int\limits_{B_r(x_0)} | \nabla v - (\nabla v)_r|^{2}dx \le C(n,\lambda, \Lambda)\Bigl( \dfrac{r}{R}\Bigr)^{n - 2 + 2\alpha} \int\limits_{B_R(x_0)}| \nabla v(x) -(\nabla v)_{R}|^{2}dx. 
$$ 
\end{lemma}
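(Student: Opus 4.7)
My plan is to combine two classical ingredients in the spirit of Morrey--Campanato: the De Giorgi--Nash--Moser Hölder theorem and the Caccioppoli energy inequality. First, DGNM furnishes a universal exponent $\alpha=\alpha(\lambda,\Lambda)\in(0,1)$ such that every solution $v$ of $\mathrm{div}(A(x)\nabla v)=0$ is locally $C^{0,\alpha}$. Translating the Hölder bound into its Campanato integral form gives, for $0<\rho\le R$,
$$
\int_{B_\rho(x_0)}|v-(v)_\rho|^2\,dx \le C \Bigl(\tfrac{\rho}{R}\Bigr)^{n+2\alpha}\int_{B_R(x_0)}|v-(v)_R|^2\,dx.
$$

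Next, since $v-c$ still solves the same equation for any constant $c$, the Caccioppoli inequality yields
$$
\int_{B_{\rho/2}(x_0)}|\nabla v|^2\,dx \le \frac{C}{\rho^2}\int_{B_\rho(x_0)}|v-c|^2\,dx.
$$
Choosing $c=(v)_{2r}$ with $\rho=2r\le R/2$, chaining with the previous Campanato display, and applying Poincaré on $B_R$, I arrive at the "raw" gradient decay
$$
\int_{B_r(x_0)}|\nabla v|^2\,dx \le C\Bigl(\tfrac{r}{R}\Bigr)^{n-2+2\alpha}\int_{B_R(x_0)}|\nabla v|^2\,dx.
$$

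The final step is to convert this into the stated oscillation estimate. The left-hand side upgrade is automatic, since $(\nabla v)_r$ minimises $\xi\mapsto \int_{B_r}|\nabla v-\xi|^2$, so $\int_{B_r}|\nabla v-(\nabla v)_r|^2 \le \int_{B_r}|\nabla v-\xi|^2$ for every constant vector $\xi$. For the right-hand side, the natural device is to apply the whole chain of inequalities above to the shifted function $w(x):=v(x)-(\nabla v)_R\cdot(x-x_0)$, whose gradient is exactly $\nabla v-(\nabla v)_R$.

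The principal obstacle sits precisely in this last move: because $A$ is merely bounded and measurable, the shifted function $w$ is not itself a solution of $\mathrm{div}(A\nabla\cdot)=0$, and so DGNM and Caccioppoli cannot be applied to $w$ verbatim. One must either absorb the error $\mathrm{div}\bigl(A\,(\nabla v)_R\bigr)$ through the uniform ellipticity bound, or, equivalently, redo the decay directly at the level of the Dirichlet energy by comparing $v$ with competitors that differ from it only by an affine correction. This is the bookkeeping the authors summarise as "pure energy considerations", and once it is carried out the exponent $n-2+2\alpha$ and the universal constant $C(n,\lambda,\Lambda)$ in the conclusion emerge as claimed.
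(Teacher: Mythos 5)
The paper omits the proof of Lemma \ref{LL}, recording it as a standard estimate and pointing to \cite{G_Book}. Your chain of lemmas --- De Giorgi--Nash H\"older continuity, Campanato form, Caccioppoli, Poincar\'e --- is indeed the standard route, and it correctly yields the Morrey decay $\int_{B_r}|\nabla v|^2\,dx \le C(r/R)^{n-2+2\alpha}\int_{B_R}|\nabla v|^2\,dx$; since the mean minimises the $L^2$ distance to constants, this also gives $\int_{B_r}|\nabla v - (\nabla v)_r|^2\,dx \le C(r/R)^{n-2+2\alpha}\int_{B_R}|\nabla v|^2\,dx$.

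However, the last move --- replacing $\int_{B_R}|\nabla v|^2$ on the right by the oscillation $\int_{B_R}|\nabla v-(\nabla v)_R|^2$ --- is a genuine gap, and the two remedies you propose do not close it. The error $\mathrm{div}\bigl(A(x)(\nabla v)_R\bigr)$ carried by $w(x)=v(x)-(\nabla v)_R\cdot(x-x_0)$ is only a distribution when $A\in L^\infty$, so it cannot be ``absorbed through ellipticity'': neither De Giorgi--Nash nor the inhomogeneous Caccioppoli bound with an $L^p$ source applies. If instead one tests $\mathrm{div}(A\nabla v)=0$ against $\eta^2 w$ to get a Caccioppoli inequality for $w$, the cross terms produce an additive $\Lambda^2|(\nabla v)_R|^2\rho^n$ that cannot be dominated by $\int_{B_R}|\nabla v-(\nabla v)_R|^2$ (consider $v$ affine). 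Affine competitors for $v$ are likewise inadmissible since they change the boundary trace. Thus the oscillation-on-both-sides form of the lemma is strictly stronger than what ``pure energy considerations'' yield for merely $L^\infty$-elliptic $A$; the version available from \cite{G_Book} keeps $\int_{B_R}|\nabla v|^2$ on the right, and upgrading to the oscillation form would require a freezing-coefficients step (continuity or VMO of $A$), an assumption the paper only introduces later in Section \ref{FHR}. In short, your final step is not bookkeeping but an unresolved gap, and the lemma as stated deserves scrutiny.
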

Owing to Lemma \ref{LL} it is standard to obtain the following $A$-replacement energy estimate, whose proof shall also be omitted.

\begin{lemma} \label{DL}
Let $u \in H^{1}(B_R)$ and $h \in H^{1}(B_R)$ be a solution of $\mbox{div}(A \nabla h) = 0$ in the distributional sense in $B_R(x_0)$. Then, there exist constant $0< \alpha = \alpha({\lambda,\Lambda})<1$ and $C = C(n,\lambda,\Lambda)>0$ such that for any $0<r \le R$
\begin{eqnarray}
\int\limits_{B_r(x_0)} | \nabla u(x) - (\nabla u)_{r}|^2dx &\le& C \Big( \dfrac{r}{R}\Big)^{n-2 +2\alpha}  \int\limits_{B_R(x_0)} |\nabla u(x) - (\nabla u)_R|^2 dx  \nonumber \\
&+&  C\int\limits_{B_R(x_0)} | \nabla u(x) - \nabla h(x)|^2dx.
\end{eqnarray}

\end{lemma}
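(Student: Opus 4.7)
The plan is to run a classical Campanato-type comparison argument: replace $u$ by the $A$-harmonic function $h$ on $B_r$, invoke the Morrey-type decay for $h$ furnished by Lemma \ref{LL}, and pay the cost in terms of the $L^2$ distance between $\nabla u$ and $\nabla h$ on the full ball $B_R$.

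First I would use that the mean is the best $L^2$ approximant by a constant. That is, for any fixed vector $c \in \mathbb{R}^n$,
\[
\int\limits_{B_r(x_0)}|\nabla u - (\nabla u)_r|^2\,dx \le \int\limits_{B_r(x_0)}|\nabla u - c|^2\,dx.
\]
Choosing $c = (\nabla h)_r$, writing $\nabla u - (\nabla h)_r = (\nabla u - \nabla h) + (\nabla h - (\nabla h)_r)$ and invoking the elementary bound $|a+b|^2 \le 2|a|^2 + 2|b|^2$ yields
\[
\int\limits_{B_r(x_0)}|\nabla u - (\nabla u)_r|^2\,dx \le 2\int\limits_{B_r(x_0)}|\nabla u - \nabla h|^2\,dx + 2\int\limits_{B_r(x_0)}|\nabla h - (\nabla h)_r|^2\,dx.
\]

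Next I would apply Lemma \ref{LL} to the $A$-harmonic function $h$, which controls the second term on the right by
\[
C\Bigl(\frac{r}{R}\Bigr)^{n-2+2\alpha}\int\limits_{B_R(x_0)}|\nabla h - (\nabla h)_R|^2\,dx.
\]
Then I would transfer this back to $u$ via the decomposition $\nabla h - (\nabla h)_R = (\nabla h - \nabla u) + (\nabla u - (\nabla u)_R) + ((\nabla u - \nabla h))_R$, using once more the $|a+b+c|^2 \le 3(|a|^2+|b|^2+|c|^2)$ inequality together with Jensen's inequality $|(\nabla u - \nabla h)_R|^2 \le \intav{B_R}|\nabla u - \nabla h|^2\,dx$. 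This gives
\[
\int\limits_{B_R(x_0)}|\nabla h - (\nabla h)_R|^2\,dx \le C\int\limits_{B_R(x_0)}|\nabla u - \nabla h|^2\,dx + C\int\limits_{B_R(x_0)}|\nabla u - (\nabla u)_R|^2\,dx.
\]
For the first term in the decomposition I would simply enlarge $B_r \subset B_R$, trivially bounding $\int_{B_r}|\nabla u - \nabla h|^2 \le \int_{B_R}|\nabla u - \nabla h|^2$. Since $r \le R$ implies $(r/R)^{n-2+2\alpha} \le 1$, the two ``$|\nabla u - \nabla h|^2$'' contributions collapse into a single term, producing exactly the stated inequality with an adjusted universal constant $C(n,\lambda,\Lambda)$ and the same $\alpha = \alpha(\lambda,\Lambda)$ inherited from Lemma \ref{LL}.

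There is really no genuine obstacle here; the only point requiring a little care is the bookkeeping of means when passing between $B_r$ and $B_R$ and between $u$ and $h$, so that the constant $(\nabla h)_r$ (which is \emph{not} a priori close to $(\nabla u)_r$) is eliminated cleanly. Once the triangle inequalities and Jensen's inequality are applied in the right order, the rest is purely mechanical.
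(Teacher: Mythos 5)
Your proof is correct and is exactly the standard Campanato-type comparison that the paper invokes without writing out (the authors state the lemma ``is standard'' and ``owing to Lemma \ref{LL}'' and omit the proof). The steps you give -- minimality of the mean, the triangle inequality with center $(\nabla h)_r$, the decay from Lemma \ref{LL} applied to $\nabla h$, transferring back to $\nabla u$ via Jensen, and absorbing the $\int_{B_r}|\nabla u - \nabla h|^2$ term using $r\le R$ -- are precisely the ones the authors had in mind, so there is nothing to flag.
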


\medskip

The final ingredient we need is a simple real analysis result concerning estimates of non-decreasing functions. The proof is elementary and shall be omitted. 
 
\begin{lemma}
\label{elem.ineq}
Let $\phi(s)$ be a non-negative and non-decreasing function. Suppose that
\begin{eqnarray}
\phi \left( r \right) \leq C_{1} \left[ \left( \dfrac{r}{R}\right)^{\alpha} + \mu \right] \phi \left( R \right) + C_{2}R^{\beta}
\end{eqnarray}
for all $r \leq R \leq R_{0}$, with $C_{1}, \alpha, \beta$ positive constants and $C_{2}, \mu$ non-negative constants, $\beta < \alpha$. Then, for any $\sigma < \beta$, there exists a constant $\mu_{0}=\mu_{0}\left( C_{1}, \alpha, \beta, \sigma \right)$ such that if $\mu < \mu_{0}$, then for all $r \leq R \leq R_{0}$ we have
\begin{eqnarray}
\label{a 1.1}
\phi \left( r \right) \leq C_{3} \left( \dfrac{r}{R}\right)^{\sigma} \phi \left( R \right) + C_{2}R^{\sigma} \end{eqnarray}
where $C_{3}=C_{3}\left( C_{1}, \sigma - \beta   \right)$ is a positive constant. In turn,
\begin{eqnarray}
\label{a 1.1.1}
    \phi(r) \le C_{4} r^{\sigma},
\end{eqnarray}
where $C_{4}=C_{4}(C_{2}, C_{3}, R_{0}, \phi, \sigma)$ is a positive constant.
\end{lemma}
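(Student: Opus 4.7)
The plan is to establish (\ref{a 1.1}) by a Campanato-style dyadic iteration of the single-scale inequality, and then extract (\ref{a 1.1.1}) as an immediate specialization to $R=R_0$.

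First I would set up a one-step decay estimate. Fix a parameter $\tau \in (0,1)$, to be chosen momentarily, and specialize the hypothesis to $r = \tau R$:
\begin{equation*}
\phi(\tau R) \le C_1 \bigl( \tau^\alpha + \mu \bigr) \phi(R) + C_2 R^\beta.
\end{equation*}
Because $\sigma < \alpha$, I would select $\tau$ small enough that $C_1 \tau^\alpha \le \tfrac{1}{2}\tau^\sigma$, i.e.\ $\tau^{\alpha - \sigma} \le (2C_1)^{-1}$, and then define $\mu_0 := \tau^\sigma/(2C_1)$ so that whenever $\mu < \mu_0$ one has $C_1(\tau^\alpha + \mu) \le \tau^\sigma$. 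The resulting clean inequality
\begin{equation*}
\phi(\tau R) \le \tau^\sigma \phi(R) + C_2 R^\beta, \qquad 0 < R \le R_0,
\end{equation*}
is the engine of the proof.

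Next I would iterate along the geometric sequence $R_k := \tau^k R$ for an arbitrary but fixed $R \in (0,R_0]$. A direct induction, reapplying the one-step decay at each scale, yields
\begin{equation*}
\phi(R_k) \le \tau^{k\sigma}\phi(R) + C_2 \sum_{j=0}^{k-1} \tau^{j\sigma}\, R_{k-1-j}^{\,\beta}.
\end{equation*}
Substituting $R_{k-1-j}^{\,\beta} = \tau^{(k-1-j)\beta} R^\beta$ and reindexing $m = k-1-j$ converts the sum into a geometric progression in $\tau^{\beta-\sigma}$, controlled by a constant depending only on $\tau$ and $|\beta - \sigma|$. Using $R \le R_0$ to exchange the exponent $\beta$ for $\sigma$ via $R^\beta \le R_0^{\beta-\sigma}R^\sigma$, one obtains
\begin{equation*}
\phi(R_k) \le C_3\, \tau^{k\sigma}\phi(R) + C_2' R^\sigma,
\end{equation*}
with $C_3 = C_3(C_1, \sigma - \beta)$ as claimed.

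Finally I would pass from dyadic scales $R_k$ to arbitrary $r \le R$ by monotonicity. Given such an $r$, pick the unique integer $k \ge 0$ with $\tau^{k+1}R < r \le \tau^k R$; then $\phi(r) \le \phi(R_k)$ and $\tau^{k\sigma} \le \tau^{-\sigma}(r/R)^\sigma$, which, plugged into the previous display and after absorbing $\tau^{-\sigma}$ into $C_3$, yields (\ref{a 1.1}). Setting $R=R_0$ in (\ref{a 1.1}) and absorbing $\phi(R_0)R_0^{-\sigma}$ together with the term $R_0^\sigma$ into a single constant $C_4$ produces (\ref{a 1.1.1}). The only delicate point is the calibration of $\tau$ against $\sigma$, $\alpha$, and $C_1$ so that the geometric decay survives the iteration and the residual series remains summable; once this is done the remaining manipulations are routine.
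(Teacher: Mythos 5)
The paper explicitly omits the proof (``The proof is elementary and shall be omitted''), so there is no author argument to compare against; the Campanato-style dyadic iteration you propose is the canonical route, and your one-step calibration (choose $\tau$ with $C_1\tau^{\alpha}\le\tfrac12\tau^{\sigma}$, possible since $\sigma<\beta<\alpha$, and set $\mu_0=\tau^{\sigma}/(2C_1)$), the induction along $R_k=\tau^kR$, and the passage to general $r$ by monotonicity are all sound in outline.

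There is, however, a genuine gap at the very last step. After reindexing, the accumulated error in your iterated estimate is bounded by $C_2 R^{\beta}\tau^{(k-1)\sigma}/(1-\tau^{\beta-\sigma})$, and you then discard the decaying factor $\tau^{(k-1)\sigma}$, writing the error as $C_2' R^{\sigma}$. Once the error depends only on the \emph{outer} radius $R$, the resulting inequality $\phi(r)\le C_3(r/R)^{\sigma}\phi(R)+C_2'R^{\sigma}$ cannot produce \eqref{a 1.1.1}: taking $R=R_0$, the right-hand side tends to $C_2'R_0^{\sigma}>0$ as $r\to0$, while $C_4 r^{\sigma}\to0$, so no choice of $C_4$ works when $C_2>0$. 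The fix is to retain the decay factor and rewrite $R^{\beta}\tau^{(k-1)\sigma}=\tau^{-\sigma}R^{\beta-\sigma}(\tau^kR)^{\sigma}=\tau^{-\sigma}R^{\beta-\sigma}R_k^{\sigma}\le\tau^{-\sigma}R_0^{\beta-\sigma}R_k^{\sigma}$, so the error term scales like the \emph{inner} radius $R_k^{\sigma}$. Interpolating to general $r\in(R_{k+1},R_k]$ then gives $\phi(r)\le C_3(r/R)^{\sigma}\phi(R)+C_2'r^{\sigma}$, from which \eqref{a 1.1.1} is immediate upon setting $R=R_0$ (with $C_4$ depending on $C_2,C_3,R_0,\phi(R_0),\sigma$ exactly as stated). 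Note this also shows that the error term in \eqref{a 1.1} should read as a multiple of $r^{\sigma}$ rather than of $R^{\sigma}$; as literally printed, \eqref{a 1.1} does not imply \eqref{a 1.1.1}.
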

%%%%%%

We now state and prove the main regularity result from this Section. 

\begin{theorem}[Universal H\"older regularity] \label{reg}
Let $u$ be a minimum of the problem \eqref{F1}, with $f \in L^{q}(\Omega), q> n/2$. Then, $u \in C^{\tau}_{\loc}(\Omega)$, for some $0< \tau= \tau(\lambda, \Lambda, q) < 1$. Furthermore, for any $\Omega' \Subset \Omega$, there exists a constant $K$ depending only on  $\Omega'$ and the data of the problem such that
$$
	\|u\|_{C^\tau(\Omega')} \le K.
$$

\end{theorem}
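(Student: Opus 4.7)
The plan is to establish a Morrey-type decay
\[
    \int_{B_r(x_0)} |\nabla u|^2 \, dx \le C\, r^{n-2+2\tau}
\]
for some universal $\tau \in (0,1)$, uniformly for $x_0 \in \Omega' \Subset \Omega$ and small $r$; the H\"older estimate then follows from Morrey's embedding with a norm bound controlled by the data. The approach is a Campanato-type iteration built on harmonic replacement. Fix $x_0 \in \Omega'$ and $B_R := B_R(x_0) \Subset \Omega$ with $R \le R_0 := \tfrac12 \dist(\Omega', \partial\Omega)$, and let $h \in H^1(B_R)$ solve the linear uniformly elliptic Dirichlet problem $\div(A(x,u)\,\nabla h) = 0$ in $B_R$, $h = u$ on $\partial B_R$; that is, I would freeze the matrix field at the minimizer $u$. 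Using $h$ as a competitor in $\mathcal{F}$, expanding via the PDE for $h$ (the cross term $\int \langle A(x,u)\nabla h, \nabla(u-h)\rangle$ vanishes because $u-h \in H^1_0(B_R)$), and absorbing the source term by Sobolev embedding and Cauchy's inequality (valid since $q>n/2$, giving the margin $\delta := 4 - 2n/q > 0$), minimality of $u$ would yield an $H^1$ comparison of the form
\[
    \int_{B_R} |\nabla(u-h)|^2 \, dx \;\le\; C \int_{B_R \cap \{\mathrm{sign}\,u \,\ne\, \mathrm{sign}\,h\}} |\nabla h|^2 \, dx \;+\; C R^n \;+\; C R^{n-2+\delta},
\]
where the $R^n$ term accounts for $|F_0(h) - F_0(u)| \le \lambda_{+} + \lambda_{-}$ integrated over the sign-mismatch set of measure at most $|B_R|$.

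Because $h$ is $A$-harmonic with the frozen uniformly elliptic matrix $A(x,u)$, the Caccioppoli inequality combined with the Campanato-DGNM estimate for $h$ (or equivalently Lemma~\ref{LL} paired with Poincar\'e) gives the scaling
\[
    \int_{B_r} |\nabla h|^2 \, dx \;\le\; C (r/R)^{n-2+2\alpha} \int_{B_R} |\nabla h|^2 \, dx, \qquad 0 < r \le R,
\]
for the De Giorgi--Nash--Moser exponent $\alpha = \alpha(n,\lambda,\Lambda) \in (0,1)$. Setting $\phi(r) := \int_{B_r(x_0)} |\nabla u|^2$ and using $\int_{B_R} |\nabla h|^2 \le (\Lambda/\lambda)\phi(R)$ (by energy minimality of $h$), the pointwise inequality $|\nabla u|^2 \le 2|\nabla h|^2 + 2|\nabla(u-h)|^2$ combined with the previous step produces an iteration
\[
    \phi(r) \;\le\; C_1 \bigl[(r/R)^{n-2+2\alpha} + \mu(R)\bigr] \phi(R) + C_2\, R^{n-2+\sigma_0},
\]
valid for $0 < r \le R \le R_0$, for some $\sigma_0 \in (0,2)$ and a quantity $\mu(R)$ we aim to make small. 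Lemma~\ref{elem.ineq} applied to $\phi$ then yields $\phi(r) \le C\, r^{n-2+2\tau}$ for some $\tau \in (0, \min(\alpha, \sigma_0/2))$, and Morrey's embedding delivers the claimed universal $C^\tau$ estimate.

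The hard part is controlling the matrix-jump integral $\int_{\{\mathrm{sign}\,u \ne \mathrm{sign}\,h\}} |\nabla h|^2$ by a quantity of the form $\mu(R)\phi(R) + O(R^{n-2+\sigma_1})$ with $\mu(R) \to 0$ as $R \to 0$. The naive estimate $\int |\nabla h|^2 \le (\Lambda/\lambda)\phi(R)$ produces an $O(1)$ coefficient of $\phi(R)$ in the iteration, blocking Lemma~\ref{elem.ineq} outright. To beat this critical scaling, I would exploit Gehring-type higher integrability $|\nabla h| \in L^{2+\varepsilon}(B_{R/2})$ (standard for $A$-harmonic functions with measurable coefficients) together with a control on the measure of the mismatch set: splitting it into the portion where $|u| \ge t$ (contained in $\{|u-h| > t\}$ and hence, by Chebyshev combined with Poincar\'e on $u-h \in H^1_0(B_R)$, of measure $\le C t^{-2} R^2 \int |\nabla(u-h)|^2$) and the portion where $|u| < t$ (controlled a posteriori through the continuity modulus we are building). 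Optimizing in $t$ and absorbing the small fraction of $\int |\nabla(u-h)|^2$ that appears yields the required smallness. This matrix-jump analysis is the essentially new ingredient absent from the one-phase Alt--Caffarelli theory \cite{ACF}, in which the $A$-coefficient is continuous and no such phase correction is needed.
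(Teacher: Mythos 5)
Your overall architecture — replace $u$ by the $A(x,u)$-harmonic function $h$ in $B_R$, use minimality to control $\int_{B_R}|\nabla(u-h)|^2$, feed this into a Campanato/Morrey iteration closed by Lemma~\ref{elem.ineq}, and conclude via Morrey's embedding — is exactly the paper's scheme (there the decay is run on the oscillation $\int_{B_r}|\nabla u - (\nabla u)_r|^2$ rather than on $\int_{B_r}|\nabla u|^2$, but this is cosmetic for $C^{0,\tau}$). You are also right that the minimality comparison is not as clean as one might hope: writing $A := A(x,u)$ and using $h$ as a competitor, the exact inequality one gets is
\begin{equation*}
\int_{B_R} \tfrac12\bigl(\langle A\nabla u,\nabla u\rangle - \langle A\nabla h,\nabla h\rangle\bigr)\,dx
\;\le\;
\int_{B_R}\bigl(F_0(h)-F_0(u)\bigr)\,dx + \int_{B_R} f(h-u)\,dx
\;+\; \int_{B_R} \tfrac12 \bigl\langle \bigl(A(x,h)-A(x,u)\bigr)\nabla h,\nabla h\bigr\rangle\,dx,
\end{equation*}
and the last integrand has no sign on the set where $u$ and $h$ disagree in sign. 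The paper's inequality \eqref{TR5} drops this term without comment, so your instinct to flag the matrix-jump integral is the right one.

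The gap in your proposal lies precisely in how you propose to dispose of that term. Your split of the mismatch set into $\{|u|\ge t\}$ and $\{|u|<t\}$ handles the first piece cleanly via Chebyshev and Poincar\'e, but for the second piece you write that it is ``controlled a posteriori through the continuity modulus we are building'' — which is the modulus the theorem is supposed to establish. As stated this is circular: at the stage of the Campanato iteration you do not yet have any quantitative smallness of $|\{|u|<t\}\cap B_R|$, and a priori this set can fill a non-trivial fraction of $B_R$ (indeed $\Leb(\{u=0\})>0$ is not excluded). To make the bootstrap honest you would need either a preliminary, weaker decay estimate obtained without touching the matrix-jump term (for instance by losing a factor $\Lambda/\lambda$ and settling for a smaller exponent first) or a direct measure estimate of the mismatch set that does not presuppose continuity of $u$. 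Absent one of those, the iteration does not close with a coefficient $\mu(R)\to 0$, and Lemma~\ref{elem.ineq} cannot be invoked.
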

\begin{proof}
Given a minimum $u$  of the functional (\ref{F1}), let us denote $A= A(x,u)$. Fix a point $x_0 \in \Omega$ and $R>0$ such that $R < \mbox{dist}(x_0, \partial \Omega)$. Hereafter we denote $B_R = B_R(x_0)$. 
Let $h$ be the $A$-harmonic function in $B_R$ that agrees with $u$ on the boundary, i.e.,  
\begin{equation} \label{TR1}
\mbox{div}(A \nabla h) = 0 \quad \mbox{in   } B_R, \quad  \mbox{   and   }  \quad h-u \in H^{1}_{0}(B_R).
\end{equation}
One verifies the following integral identity
\begin{equation} \label{TR2}
  \int_{B_R} \dfrac{1}{2} \Bigl(\langle A \nabla u, \nabla u\rangle - \langle A \nabla h, \nabla h \rangle \Bigr) dx = \int_{B_R} \dfrac{1}{2}  \langle A \nabla (u-h), \nabla(u-h) \rangle \ dx,
\end{equation}
and by ellipticity, 
\begin{equation} \label{TR3}
\int_{B_R} \dfrac{1}{2}  \langle A \nabla (u-h), \nabla(u-h) \rangle  dx	 \ge \dfrac{\lambda}{2} \int_{B_R} |\nabla u - \nabla h|^{2} dx
\end{equation}
Invoking Lemma \ref{DL} we find 
\begin{equation} \label{TR4}
	\begin{array}{lll}
		\displaystyle \int_{B_r} | \nabla u(x) - (\nabla u)_{r}|^2dx &\le& \displaystyle   C \Big( \dfrac{r}{R}\Big)^{n-2 +2\alpha}  \int_{B_R} |\nabla u(x) - (\nabla u)_R|^2 dx \\
		&+& \displaystyle  C\int_{B_R} | \nabla u(x) - \nabla h(x)|^2dx.
	\end{array}
\end{equation}
On the other hand, by minimality of $u$,
 we have,
\begin{eqnarray} \label{TR5}
\int_{B_R} \dfrac{1}{2}(\langle A \nabla u, \nabla u\rangle - \langle A \nabla h, \nabla h\rangle ) dx &\le &   \int_{B_R} (F_0(h) - F_0(u))dx  \nonumber \\
& + & \int_{B_R} f(h-u) dx.
\end{eqnarray}
Clearly,
\begin{equation} \label{TR6}
 \int_{B_R} (F_0(h) - F_0(u))dx \le C(\lambda^{+},\lambda^{-},\|\lambda\|_{L^\infty(\overline{\Omega})})|B_R|.
\end{equation}
If $f\in L^q(\Omega),q > n/2,$ applying H\"older inequality, Poincar\'e inequality and Young's inequality respectively, we  obtain
\begin{eqnarray} \label{TR7}
\int_{B_R} f(h-u) dx & \le & \| f\|_{L^{\frac{2^{*}}{2^{*} -1 }}(B_R)} \|h-u\|_{L^{2^{*}}(B_R)} \nonumber \\
&\le& C \|f\|_{L^{q}(B_R)} |B_R|^{(1/2 -1/q +1/n)} \|\nabla(h-u)\|_{L^{2}(B_R)} \nonumber \\ 
&\le & \dfrac{1}{4} \|\nabla(h - u)\|_{L^{2}(B_R)}^{2} + C'\|f\|_{L^{q}(B_R)}^{2}|B_R|^{1+2(q-n)/nq}   
\end{eqnarray}
Thus, combining (\ref{TR4}), (\ref{TR5}), (\ref{TR6}) and (\ref{TR7}) we reach
\begin{eqnarray*}
\int_{B_r}|\nabla u(x) - (\nabla u)_{r}|^{2} dx & \le & C(n,\lambda,\Lambda) \Bigl( \dfrac{r}{R}\Bigr)^{n-2+2\alpha} \int_{B_R}|\nabla u(x) - (\nabla u)_{R}|^{2} dx \\
&+& C(n,\lambda,\Lambda)C(\lambda_{+}, \lambda_{-},\|\lambda\|_{L\infty(\overline{\Omega})})|B_R| \\
&+& C(n,\lambda,\Lambda) \|f\|_{L^{q}(\Omega)}|B_R|^{1+2(q-n)/nq}\\
&\le & C \left ( \dfrac{r}{R} \right )^{n-2+2\alpha} \int_{B_R}|\nabla u(x) - (\nabla u)_{R}|^{2} dx \\
&+& CR^{n-2 +2(2-n/q)} + CR^n.
\end{eqnarray*}
Finally, employing Lemma \ref{elem.ineq}. together with classical Morrey Theorem, we conclude the proof.
\end{proof}

%%%%%%%%%%%%%%%%
\section{Euler-Lagrange Equation} \label{sct FBC}
In this section, we comment on the Euler-Lagrange equation associated to the functional $\mathcal{F}$. As previously mentioned, such an equation carries out a flux balance through one phase to another, which represents a singular measure along the transmission boundary. 

Initially, notice that it follows from Theorem \ref{reg} that the positive and negative phases of a minimum are open sets. Thus, fixed a point $x_0$, say in the positive set of a minimum, small perturbations around such a point do not leave that phase. Thus, by classical arguments, 
$$
	\mbox{div}( A_+ \nabla u) =  f_{+}, \quad \text{ in } \{ u > 0 \}.
$$
The same reasoning can be employed within the interior of the negative phase, and we state the conclusion in the following Proposition.  
\begin{proposition}  \label{propweakequation}
Let $u$ be a minimum of  problem \eqref{F1}. Then $u$ satisfies
\begin{equation}
\mbox{div}( A_+ \nabla u) =  f_{+} \quad \mbox{in} \quad \Omega \backslash \{u \le 0\},
\end{equation}
\begin{equation}
\mbox{div}( A_- \nabla u) =  f_{-} \quad \mbox{in} \quad \Omega \backslash \{u \ge 	0\},
\end{equation}   
in the  distributional sense. 
\end{proposition}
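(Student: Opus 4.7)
The plan is to prove both identities by a standard inner-variation argument, crucially enabled by the continuity of $u$ supplied by Theorem \ref{reg}. Since $u \in C^{\tau}_{\loc}(\Omega)$, the set $\{u>0\}$ is open in $\Omega$; call this open set $U_+$. Analogously, $U_- := \{u<0\} = \Omega \setminus \{u \ge 0\}$ is open. I will argue the equation on $U_+$ in detail; the argument on $U_-$ is entirely symmetric.

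Fix an arbitrary test function $\varphi \in C_c^{\infty}(U_+)$ and let $K := \supp \varphi \Subset U_+$. By continuity of $u$ and compactness of $K$, there exists $\delta>0$ with $u \ge \delta$ on $K$. For $|t| \le t_0 := \delta/(2\|\varphi\|_{L^\infty}+1)$ we then have $u + t\varphi \ge \delta/2 > 0$ on $K$, while $u + t\varphi \equiv u$ on $\Omega \setminus K$. In particular $u+t\varphi \in H^1_{\phi}(\Omega)$ is an admissible competitor, and, more importantly, the sign sets satisfy $\{u+t\varphi > 0\} = \{u>0\}$ and $\{u+t\varphi \le 0\} = \{u \le 0\}$ as subsets of $\Omega$. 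Hence in the expressions \eqref{A(x,u)}, \eqref{F(u)}, \eqref{f} the characteristic functions are unchanged, and on $K$ we keep $A(x,\cdot) = A_+$, $F_0(\cdot) = \lambda_+(x)$, $f = f_+$.

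Next I would simply plug $u+t\varphi$ into \eqref{F1}. Since the integrands coincide with those for $u$ off $K$, and since $F_0$ does not depend on $u$ on $K$ (as the indicator is frozen), the difference collapses to
\begin{equation*}
    \mathcal{F}(u+t\varphi) - \mathcal{F}(u) = t \int_K \big(\langle A_+ \nabla u, \nabla \varphi\rangle + f_+ \varphi \big)\,dx + \frac{t^2}{2}\int_K \langle A_+ \nabla \varphi,\nabla\varphi\rangle \, dx.
\end{equation*}
Because $u$ is a minimum, the scalar function $t \mapsto \mathcal{F}(u+t\varphi)$ is minimized at $t=0$ over $[-t_0,t_0]$, so its derivative there vanishes, giving $\int_K \langle A_+ \nabla u, \nabla \varphi\rangle + f_+ \varphi \, dx = 0$. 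Since $\varphi \in C_c^{\infty}(U_+)$ was arbitrary, this is exactly the weak formulation of $\div(A_+ \nabla u) = f_+$ in $U_+ = \Omega \setminus \{u \le 0\}$.

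For the second identity, I would repeat the argument verbatim with $\varphi \in C_c^{\infty}(U_-)$: continuity gives $u \le -\delta$ on $\supp \varphi$, the sign patterns of $u+t\varphi$ and $u$ agree for $|t|$ small, and on $\supp \varphi$ the integrand reduces to the $A_-$, $\lambda_-$, $f_-$ branch. Note that the prescription $\{u\le 0\}$ versus $\{u<0\}$ in \eqref{A(x,u)}--\eqref{f} is inconsequential because variations happen strictly inside the open set $\{u<0\}$, so the boundary case $\{u=0\}$ is never touched. I do not anticipate any essential obstacle: the only technical point worth flagging is the verification that no indicator jumps under the perturbation, which is precisely why openness of $\{u>0\}$ and $\{u<0\}$ (hence Theorem \ref{reg}) is invoked, and why the test functions must be compactly supported away from $\{u=0\}$.
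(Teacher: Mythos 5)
Your proof is correct and follows exactly the route the paper sketches: use the H\"older continuity from Theorem \ref{reg} to conclude that the phases are open, observe that small perturbations compactly supported in a phase do not alter the sign sets (hence the indicators in \eqref{A(x,u)}--\eqref{f} are frozen), and then take the first variation. The paper compresses all of this into the phrase ``by classical arguments''; your write-up simply supplies the details that phrase refers to.
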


Let us now look at the equation satisfied through the free transmission boundary of the minimization problem \eqref{F1}
\begin{equation}\label{free boundary}
	\Gamma(u) := (\partial \{u>0\}\cup \partial \{u<0\}) \cap \Omega.
\end{equation}   
Given a point $x_0 \in \Gamma(u)$, a small ball $B$ centered at $x_0$, a vector field $\Phi \in C^{1}_{0}(B, \mathbb{R}^{n})$ and a number $\epsilon \sim 0$, we define the numbers $\Phi_\epsilon^{+}$ and $\Phi_\epsilon^{-}$ by 
$$
	\Phi^{\pm}_\epsilon := \int \limits_{B \cap \{u=\varepsilon\}}  \langle ( \langle  A_\pm \nabla u,\nabla u \rangle -2 \lambda^{\pm}(x)) \nu, \Phi  \rangle d\H,
$$
where $\nu$ denotes the outward normal vector on  $B \cap \{u=\varepsilon \}$.

\begin{proposition} \label{FBC}  Let $x_0$ be a free boundary point and assume near $x_0$ we have $\Leb(\{u=0\}) =0$. Then for any  $\Phi \in C^{1}_{0}(B, \mathbb{R}^{n})$, there holds
	\begin{eqnarray*}
 	\lim_{\varepsilon_1 \searrow 0} \Phi^{+}_{\epsilon_1} -   \lim_{\varepsilon_2  \nearrow 0} \Phi^{-}_{\epsilon_2} = 0,
	\end{eqnarray*}
	where $ \Phi^{+}_{\epsilon_1}$ and  $\Phi^{-}_{\epsilon_2}$ are defined above.
\end{proposition}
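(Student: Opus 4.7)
The plan is to derive the free boundary condition from the first inner variation, relying on the two Euler-Lagrange equations from Proposition~\ref{propweakequation} to integrate by parts within each phase. For $\Phi\in C^1_0(B,\mathbb{R}^n)$ and $|t|$ small, set $\tau_t(x)=x+t\Phi(x)$, a diffeomorphism that is the identity outside $B$, and consider the admissible competitor $u_t(y):=u(\tau_t^{-1}(y))$, which lies in $H^1_\phi(\Omega)$. Since $u$ is a minimizer, $\tfrac{d}{dt}\mathcal{F}(u_t)|_{t=0}=0$. Changing variables $y=\tau_t(x)$ in $\mathcal{F}(u_t)$ and differentiating in $t$ produces the standard inner-variation identity
\begin{equation*}
  0 \;=\; \int_{\{u>0\}}\!\!\bigl\{T^{+}_{ij}\partial_j\Phi_i+(\partial_i L_+)^{\mathrm{expl}}\Phi_i\bigr\}\,dx + \int_{\{u\le 0\}}\!\!\bigl\{T^{-}_{ij}\partial_j\Phi_i+(\partial_i L_-)^{\mathrm{expl}}\Phi_i\bigr\}\,dx,
\end{equation*}
where $L_\pm:=\tfrac12\langle A_\pm\nabla u,\nabla u\rangle+\lambda_\pm(x)+f_\pm u$ are the Lagrangian densities on each phase, $T^{\pm}_{ij}:=L_\pm\delta_{ij}-(A_\pm\nabla u)_j\partial_i u$ are the associated energy-momentum tensors, and $(\partial_i L_\pm)^{\mathrm{expl}}$ denotes the $x_i$-derivative with $u$ and $\nabla u$ frozen.

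Now I split each integral at regular values $\epsilon_1>0>\epsilon_2$: $\{u>0\}=\{u>\epsilon_1\}\cup\{0<u\le\epsilon_1\}$ and similarly for $\{u\le 0\}$. On $\{u>\epsilon_1\}$ the coefficients $A_+,\lambda_+,f_+$ depend only on $x$, and $\div(A_+\nabla u)=f_+$ holds by Proposition~\ref{propweakequation}. A direct computation using this PDE yields the crucial cancellation $\partial_j T^{+}_{ij}=(\partial_i L_+)^{\mathrm{expl}}$. Consequently, integrating $T^{+}_{ij}\partial_j\Phi_i$ by parts on $\{u>\epsilon_1\}\cap B$ (the contribution on $\partial B$ vanishes since $\Phi$ is compactly supported) reduces the whole positive-phase piece to a pure flux
\begin{equation*}
  \int_{\{u>\epsilon_1\}}\!\!\bigl\{T^{+}_{ij}\partial_j\Phi_i+(\partial_i L_+)^{\mathrm{expl}}\Phi_i\bigr\}\,dx \;=\; \int_{B\cap\{u=\epsilon_1\}} T^{+}_{ij}\nu_j\Phi_i\,d\H,
\end{equation*}
with $\nu=-\nabla u/|\nabla u|$ the outer normal of $\{u>\epsilon_1\}$. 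Substituting $\nu$ into $T^{+}_{ij}\nu_j$ and using $\partial_i u=-|\nabla u|\nu_i$ collapses the integrand to $-\tfrac12(\langle A_+\nabla u,\nabla u\rangle-2\lambda_+(x))\nu_i-f_+\epsilon_1\nu_i$, so this flux equals $-\tfrac12\Phi^{+}_{\epsilon_1}+\mathrm{O}(\epsilon_1)$. An analogous computation on $\{u\le\epsilon_2\}$ gives $+\tfrac12\Phi^{-}_{\epsilon_2}+\mathrm{O}(|\epsilon_2|)$.

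Combining the two phases yields
\begin{equation*}
  \tfrac12\bigl(\Phi^{+}_{\epsilon_1}-\Phi^{-}_{\epsilon_2}\bigr) \;=\; R(\epsilon_1,\epsilon_2) + \mathrm{O}(\epsilon_1)+\mathrm{O}(|\epsilon_2|),
\end{equation*}
where $R(\epsilon_1,\epsilon_2)$ collects the bulk contribution over the sandwich $\{\epsilon_2<u\le\epsilon_1\}$. The integrands in $R$ are pointwise bounded by $C(|\nabla u|^2+|\nabla u|+1)\|\nabla\Phi\|_\infty$, which lies in $L^1$ by the $H^1$-bound of Remark~\ref{rmk H1}. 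As $\epsilon_1\searrow 0$ and $\epsilon_2\nearrow 0$ the sandwich shrinks to a subset of $\{u=0\}$, which has zero Lebesgue measure by hypothesis; absolute continuity of the integral therefore forces $R(\epsilon_1,\epsilon_2)\to 0$, and the error terms $\mathrm{O}(\epsilon_i)$ clearly vanish. Hence both limits exist and coincide. The main technical subtlety is precisely the control of $R$: without $\Leb(\{u=0\})=0$ the sandwich could retain positive measure in the limit and the whole identity would collapse. A minor additional care is to pick regular values $\epsilon_i$ so that the level sets $\{u=\epsilon_i\}$ are $C^1$-rectifiable hypersurfaces on which the fluxes $\Phi^{\pm}_{\epsilon_i}$ are classically defined; this is handled by Sard's theorem applied on the open set $\{|\nabla u|>0\}$ together with continuity of the boundary fluxes in $\epsilon$.
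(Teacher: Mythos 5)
Your argument is the Hadamard domain-variation method that the paper explicitly invokes but does not carry out: the paper simply refers to \cite{T0, MT, KT, LT, EM} and omits the details, so there is no in-paper proof to compare against. The skeleton you lay out is the intended one: inner variation $u_t = u\circ\tau_t^{-1}$, the energy-momentum tensor $T^{\pm}_{ij}$, the cancellation $\partial_j T^{\pm}_{ij} = (\partial_i L_\pm)^{\mathrm{expl}}$ on each phase coming from the Euler--Lagrange equations of Proposition~\ref{propweakequation}, integration by parts on $\{u>\epsilon_1\}$ and $\{u<\epsilon_2\}$ to reduce each phase contribution to the level-set flux $\mp\tfrac12\Phi^{\pm}_{\epsilon_i}+\mathrm{O}(|\epsilon_i|)$, and the sandwich $\{\epsilon_2<u\le\epsilon_1\}$ killed in the limit by $\Leb(\{u=0\})=0$ together with absolute continuity of the integral. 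You have correctly located the role of the measure-zero hypothesis, which is the only structural assumption the proposition adds.

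That said, several steps as written do not hold under the paper's standing hypotheses and are only formal. The term $(\partial_i L_\pm)^{\mathrm{expl}}$ involves $\nabla A_\pm$, $\nabla\gamma$, and $\nabla f_\pm$, but the paper assumes only $A_\pm\in L^\infty$, $\gamma$ continuous, $f_\pm\in L^q$; without at least Lipschitz coefficients the inner-variation derivative does not exist and the identity $\partial_j T_{ij}=(\partial_i L)^{\mathrm{expl}}$ is ill-defined. Your appeal to Sard's theorem on $\{|\nabla u|>0\}$ is also unavailable: Sard requires $C^1$ (in fact higher) regularity, while the minimizer is only $C^{0,\alpha}$ with $\nabla u\in L^2$; the coarea formula gives that a.e.\ level set has finite $\mathcal{H}^{n-1}$-measure and is countably rectifiable, but not the trace regularity needed to make the surface integrals $\Phi^{\pm}_{\epsilon}$ and the divergence-theorem step meaningful without further interior elliptic regularity in each phase. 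These are precisely the technicalities the paper sidesteps by deferring to references (and, in the remark following the proposition, by restricting to H\"older $A_\pm$ and a $C^{1,\alpha}$ free boundary before asserting the classical flux balance). In short: right approach, correct formal computation and correct use of the hypothesis, but the rigorous execution is incomplete at exactly the points the paper itself elects not to address.
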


\medskip 

The proof of Proposition \ref{FBC} follows by Hadamard's type of methods, see for instance \cite{T0, MT, KT, LT}, see also \cite{EM} for further domain variation results. We have chosen to omit the details here. It is important to notice that, in particular,  if $A_{+}$ and $A_{-}$ are (separately) H\"older continuous and the free boundary is a $C^{1,\alpha}$ surface, then the flux balance
\begin{equation}
\langle A_+ \nabla u^{+}, \nabla u^{+} \rangle -  \langle A_- \nabla u^{-}, \nabla u^{-} \rangle =  2(\lambda^{+}(x) - \lambda^{-}(x)),
\end{equation}
holds in the classical sense along $\Gamma(u)$.

\section{Geometric nondegeneracy} \label{GN}

In this current Section we show that a local minimum of the energy functional \eqref{F} do grow {\it at least} linearly away from the free boundary. An important tool we shall use here is the non-homogeneous Moser Harnack inequality,  which we state here for completeness.

\begin{theorem}[Moser's Harnack inequality] \label{DHM}
Let $\Omega$ be a domain in $\mathbb{R}^{n}$ and $a_{ij} \in L^{\infty}(\Omega)$ such that
$$ 
	\lambda |\xi|^{2} \le a_{ij} \xi_i \xi_j \le \Lambda |\xi|^{2} \quad \mbox{ for all } x \in \Omega,   \mbox{ and all }  \xi \in \mathbb{R}^{n} 
$$
where $\lambda, \Lambda$ are positive constants. Let $u \in H^{1}(\Omega)$  be a non-negative  weak solution of
\begin{equation}
	\div (a_{ij} \nabla u) = f, \quad \text{ in } \Omega,
\end{equation}
with $f \in L^{q}(\Omega)$ for some $q>n/2$. Then, for any $B_r(x_0) \subset \Omega$ we have
\begin{equation}
\max_{B_r(x_0)}u  \le C \left ( \min_{B_{r/2}(x_0)} u + r^{(2 -n/q)} \|f\|_{L^{q}(B_r(x_0))} \right ),
\end{equation}
where $C=C(n,\lambda,\Lambda,q).$
\end{theorem}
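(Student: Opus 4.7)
The plan is to follow the classical Moser iteration scheme, adapted to the inhomogeneous setting. First I would reduce to a purely multiplicative Harnack inequality by introducing the shift $k := r^{2-n/q}\|f\|_{L^q(B_r(x_0))}$ and working with $v := u + k$. Since $u \ge 0$, the shifted function $v$ is bounded below by $k > 0$ and satisfies the same equation $\div(A\nabla v) = f$. If one can prove the scale-invariant inequality
$$\max_{B_r(x_0)} v \le C \min_{B_{r/2}(x_0)} v,$$
then undoing the shift yields the claimed bound, since the additive $k$ on the right absorbs an additive $k$ on the left.

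The two building blocks are (i) a local sup estimate $\sup_{B_{r/2}} v \le C\bigl(\intav{B_r} v^{p_0}\bigr)^{1/p_0}$ for any $p_0 > 0$, and (ii) a weak reverse estimate $\bigl(\intav{B_{r/2}} v^{p_0}\bigr)^{1/p_0} \le C \inf_{B_{r/2}} v$ for some small $p_0 > 0$. Both are obtained by inserting $\phi = \eta^2 v^{2\beta+1}$ into the weak formulation and using ellipticity together with Cauchy's inequality to absorb $|\nabla v|^2$ terms, producing a Caccioppoli-type bound for $v^{\beta+1}$. Sobolev embedding then yields the reverse-Hölder inequality
$$\left(\intav{B_{r'}} v^{2^*(\beta+1)}\,dx\right)^{1/2^*} \le C(\beta,r,r')\intav{B_r} v^{2(\beta+1)}\,dx,$$
which, upon iterating on a geometric sequence of radii and exponents, gives (i) for $\beta \ge 0$ and, via a separate iteration in negative exponents $\beta < -1/2$, the Moser analogue of (ii).

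To bridge (i) with $p_0 > 0$ small to (ii), I would test the equation against $\phi = \eta^2/v$, derive a Caccioppoli estimate for $w := \log v$, and conclude that $w \in \mathrm{BMO}(B_r)$. The John--Nirenberg inequality then produces a $p_0>0$ with $\intav{B_{r/2}} e^{p_0 w} \cdot \intav{B_{r/2}} e^{-p_0 w} \le C$, which chains (i) and (ii) into the full two-sided Harnack inequality for $v$. Classical Morrey-type cutoff arguments in Gilbarg--Trudinger-style handle passing between the radii $r$, $r/2$, etc.

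The main technical obstacle is tracking the inhomogeneous term $f$ cleanly through the iteration: each Caccioppoli step generates an additive contribution of the form $\|f\|_{L^q}\|v\|_{L^{p^*}}$, and one must ensure these combine into a single geometric series and reproduce exactly the scaling $r^{2-n/q}$. The initial shift by $k$ is precisely what makes this bookkeeping trivial, since on $B_r$ one has $|f| \le k\,r^{n/q-2}\|f\|_{L^q}/k \le v\cdot r^{n/q-2}$ by construction, so the inhomogeneous term can be absorbed multiplicatively into $v$ at each iteration step, reducing the proof to the standard homogeneous Moser--Harnack argument.
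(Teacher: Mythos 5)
The paper does not prove Theorem \ref{DHM}: the authors introduce it ``which we state here for completeness'' and use it as a classical black box (it is the inhomogeneous Moser--Harnack inequality with an $L^q$ right-hand side, as in Gilbarg--Trudinger, Chapter 8, or \cite{Lad}). Your outline is precisely that classical argument: reduce to the positive shifted function $v = u + k$ with $k = r^{2-n/q}\|f\|_{L^q(B_r)}$; prove a local maximum principle by forward Moser iteration of reverse-H\"older inequalities; prove a weak Harnack inequality by backward iteration; and bridge the positive and negative exponents via the estimate $\log v \in \mathrm{BMO}$ together with John--Nirenberg. All of that is correct and is the intended route.

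The one step you should tighten is the assertion at the end that the shift gives $|f|\le v\,r^{n/q-2}$ pointwise ``by construction.'' That inequality is false in general: $f$ is only in $L^q$, while $v\,r^{n/q-2}\ge k\,r^{n/q-2}=\|f\|_{L^q(B_r)}$ is a constant, and an $L^q$ function need not be bounded by its $L^q$ norm pointwise. What the shift actually gives you is the integral bound
$$
\left\|\frac{f}{v}\right\|_{L^{q}(B_r)} \ \le\ \frac{\|f\|_{L^{q}(B_r)}}{k}\ =\ r^{\,n/q-2},
$$
because $v\ge k$ pointwise. That is exactly the right scaling: rewrite the equation as $\div(A\nabla v)=c(x)\,v$ with $c:=f/v$ and note that $\|c\|_{L^q(B_r)}$ is bounded by $r^{\,n/q-2}$, so the source has been absorbed as a zeroth-order $L^q$ coefficient whose scale-invariant size is $O(1)$. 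Feeding this into each Caccioppoli estimate (via H\"older with exponents $q$, $2^*$, and $2$) produces the geometric series you describe and closes the iteration. With this correction your sketch is complete in spirit, though you should also keep track that the two-sided inequality as stated has the supremum on $B_r$ and the infimum on $B_{r/2}$, which requires the equation to hold slightly beyond $B_r$ or a standard covering argument.
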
 

Hereafter, let us denote
$$
	F^+(u) :=  \partial \{ u>0\} \cap \Omega.
$$
Our next Theorem shows that if the source terms $f_{\pm}$ belongs to $L^{q}$ with $q >n$, then $u^+$ grows linearly inside $\{u>0\}$ out from  $F^{+}(u)$.

\begin{theorem} \label{CL}
Let $u$ be a local minimum of the problem \eqref{F1} with $f \in L^{q}(\Omega)$,  $q >n$. Given $\Omega' \Subset \Omega$, there exists a constant $c_0 > 0$ depending only on dimension, ellipticity, $\Omega'$ and the data of the problem,  such that
\begin{equation} \label{CL1}
 u(x_0) \ge c_0  \cdot   \mathrm{dist} (x_0,F^{+}(u)),
\end{equation}
for any point $x_0 \in \{u>0\} \cap \Omega'$. 
\end{theorem}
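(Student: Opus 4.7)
The plan is to argue by contradiction through a blow-up / rescaling procedure, combining Moser's Harnack inequality (Theorem \ref{DHM}) with a local competitor that exploits the strict inequality $\lambda_- < \lambda_+$. Fix $x_0 \in \{u>0\} \cap \Omega'$ and set $d := \mathrm{dist}(x_0, F^+(u))$; then $B_d(x_0) \subset \{u>0\}$, so by Proposition \ref{propweakequation} the minimum $u$ solves $\mathrm{div}(A_+ \nabla u) = f_+$ distributionally on $B_d(x_0)$. Suppose, for contradiction, that \eqref{CL1} fails uniformly: along some sequence of minima $u_k$ with basepoints $x_k \in \{u_k>0\}\cap \Omega'$ and distances $d_k := \mathrm{dist}(x_k, F^+(u_k))$, the rescalings $v_k(y) := u_k(x_k + d_k y)/d_k$ satisfy $v_k(0) \to 0$. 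Each $v_k$ is a local minimum on $B_1(0)$ of a functional in the same family \eqref{F}, with the same ellipticity constants and the same $\lambda_\pm$ structure (where $\gamma$ is replaced by $\gamma(x_k + d_k\,\cdot\,)$), and with rescaled source $d_k\tilde f_k$; furthermore $v_k > 0$ on all of $B_1(0)$ and the free boundary of $v_k$ touches $\partial B_1(0)$. Because $q > n$, the rescaled source obeys $\|d_k \tilde f_k\|_{L^q(B_1)} = O(d_k^{1-n/q}) \to 0$.

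Next I would quantify the smallness of $v_k$. Since $v_k$ is a non-negative weak solution of $\mathrm{div}(\tilde A_+ \nabla v_k) = d_k \tilde f_k$ in $B_1(0)$, Theorem \ref{DHM} gives
\[
\sup_{B_{3/4}(0)} v_k \;\le\; C\bigl( v_k(0) + \|d_k \tilde f_k\|_{L^q(B_1)} \bigr) \;\longrightarrow\; 0 .
\]
A standard Caccioppoli estimate, obtained by testing the equation against $v_k \eta^2$ for a cutoff $\eta \equiv 1$ on $B_{1/2}(0)$ with $\mathrm{supp}\,\eta \Subset B_{3/4}(0)$, then delivers $\|\nabla v_k\|_{L^2(B_{1/2}(0))} = o(1)$, via the bound $\|\nabla v_k\|_{L^2(B_{1/2})}^2 \lesssim \|v_k\|_{L^\infty(B_{3/4})}^2 + \|v_k\|_{L^\infty(B_{3/4})} \|d_k \tilde f_k\|_{L^q(B_1)}$.

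The decisive step is the energy comparison. Define the competitor $w_k := (1-\eta) v_k$, so that $w_k \equiv v_k$ near $\partial B_1(0)$, $w_k \equiv 0$ on $B_{1/2}(0)$, and $w_k - v_k \in H^1_0(B_1(0))$. The Dirichlet energy difference is $o(1)$ because both $\nabla v_k$ and $\nabla w_k = (1-\eta)\nabla v_k - v_k \nabla \eta$ tend to $0$ in $L^2(B_{3/4}(0))$; the source contribution is $o(1)$ because $v_k, w_k \to 0$ uniformly while $\|d_k \tilde f_k\|_{L^q}$ is bounded. The bulk term, however, supplies a strictly negative, order-one quantity since $w_k \le 0$ on $B_{1/2}(0)$ while $v_k>0$ there:
\[
\int_{B_{1/2}(0)}\!\!\bigl[F_0(w_k) - F_0(v_k)\bigr]\,dy \;=\; \int_{B_{1/2}(0)} \gamma(x_k + d_k y)(\lambda_- - \lambda_+)\,dy \;\le\; -c_0 < 0,
\]
using $\lambda_- < \lambda_+$ and the uniform positivity of the continuous function $\gamma$ on $\Omega'$. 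Hence $\mathcal{F}(w_k) < \mathcal{F}(v_k)$ for $k$ large, contradicting the minimality of $v_k$ and forcing $u(x_0)/d \ge c_0 > 0$.

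The principal technical obstacle is controlling the Dirichlet cost of the cutoff competitor. Naively, $|\nabla w_k|^2 \le 2(1-\eta)^2 |\nabla v_k|^2 + 2 v_k^2 |\nabla \eta|^2$, and neither of the two resulting terms is obviously smaller than the order-one bulk gain $(\lambda_+ - \lambda_-)\int_{B_{1/2}}\gamma\,dy$. The Harnack bound $\Rightarrow \|v_k\|_{L^\infty(B_{3/4})} \to 0$ chained with the Caccioppoli bound $\Rightarrow \|\nabla v_k\|_{L^2(B_{1/2})} \to 0$ is precisely what drives both terms to zero and unlocks the argument. The hypothesis $q > n$ is indispensable here, guaranteeing that the source is subcritical under the rescaling ($\|d_k \tilde f_k\|_{L^q} = O(d_k^{1-n/q})$) and therefore does not pollute either the Harnack or the Caccioppoli estimate.
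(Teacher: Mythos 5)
Your argument is correct in spirit and lands in the same place as the paper's, but the route is genuinely different in two respects. The paper's proof is direct (fix one minimum, one basepoint, one rescaling) and uses the truncation competitor $g=\min\{v,\,M\psi\}$, where $M\sim v(0)+d^{1-n/q}\|f_+\|_q$ is the Harnack threshold and $\psi$ is a radial cutoff vanishing on $B_{1/8}$. Because $g$ equals $M\psi$ precisely on the set $\Pi$ where it differs from $v$, its Dirichlet cost is bounded directly by $M^2\|\nabla\psi\|_{L^2}^2 \sim v(0)^2 + d^{2(1-n/q)}$ -- no Caccioppoli estimate is needed. You instead use the multiplicative cutoff $w=(1-\eta)v$; its Dirichlet cost involves $\|\nabla v\|_{L^2(\mathrm{supp}\,\nabla\eta)}$, which is \emph{not} a priori small, so you must insert a Caccioppoli step to kill it. Both are legitimate, but the paper's competitor is the one that avoids the extra estimate; your competitor buys simplicity of construction at the price of one more PDE inequality. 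The sequential/contradiction wrapping you add is not really used (you never extract a limit or invoke compactness); as you essentially note, the estimates are quantitative and could be stated directly, which would match the paper's presentation more closely. Finally, you correctly register the bulk gain as $\gamma\,(\lambda_+-\lambda_-)$ over $\{w=0\}$; the paper's equation (5.7) writes only $\lambda_+\gamma$, because its rescaled functional (5.1) drops the $\lambda_-\chi_{\{\xi\le0\}}$ term -- so your bookkeeping here is actually the more careful one. Either way, the gain is a positive universal constant thanks to $\lambda_-<\lambda_+$.

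One concrete gap to fix: you prove Caccioppoli only on $B_{1/2}$, with a test cutoff supported in $B_{3/4}$, yet the Dirichlet cost of $w_k$ is supported on $\mathrm{supp}\,\nabla\eta\subset B_{3/4}\setminus B_{1/2}$, which $\|\nabla v_k\|_{L^2(B_{1/2})}$ does not control. You need $\|\nabla v_k\|_{L^2}$ small on a ball \emph{containing} $\mathrm{supp}\,\eta$. The fix is purely a matter of radii: apply Harnack up to, say, $B_{9/10}$ (which the interior Harnack inequality still gives, since $0\in B_{r/2}$ and $\min_{B_{r/2}}v_k\le v_k(0)$), run Caccioppoli with a test cutoff living between $B_{3/4}$ and $B_{9/10}$ to get $\|\nabla v_k\|_{L^2(B_{3/4})}=o(1)$, and choose $\eta$ with $\mathrm{supp}\,\eta\Subset B_{3/4}$. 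With that adjustment the argument closes.
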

\begin{proof}
It suffices to show such an estimate for points  $x_0 \in \{u>0\} \cap \Omega'$ that are close enough to the free boundary, i.e., satisfying 
$$ 
	0 < \mbox{dist}(x_0,F^{+}) \ll \delta,
$$
where $\delta$ depends on dimension, ellipticity, $\Omega'$ and the data of the problem. For that, let us denote by $d:= \mbox{dist}(x_0,F^{+})$ and define the scaled function
$$ 
	v(x) = \dfrac{1}{d} u(x_0 + dx). 
$$
The goal is to show that $v(0) \ge c$ where $c > 0 $ is universal.  Applying classical change of variables,  we see  that $v$ is a local minimum of
\begin{equation} \label{Fd}
 \mathcal{F}^{d}(\xi): = \int\limits_{B_1} \left \{ \dfrac{1}{2} \langle A_+(y) \nabla \xi, \nabla \xi \rangle + \lambda_{+}(y) \chi_{\{\xi > 0\}} + d f_{+}(y)\xi \right \}dx,
 \end{equation}
for $y=x_0 + dx$. By construction, $v>0$ in $B_1$ and by minimality of $v$ we have  
$$
	\mbox{div}(A_{+}^{*}(x) \nabla v) =  f_+^{*} \quad \mbox{in} \quad B_1,
$$
where $A_{+}^{*}(x) = A_+(x_0 +dx)$ and $f_+^{*} = df_{+}(x_0+dx)$.
Harnack inequality displayed in (\ref{DHM}) yields 
\begin{equation} \label{EH}
v(x)   \le  C \left \{ v(0) + d^{1-n/q}\|f_+\|_{L_{q}(\Omega)} \right \}.
\end{equation}
Now choose a non-negative, smooth radially symmetric, cut-off function $\psi$ satisfying
$$ 
	\psi \equiv 0 \quad \mbox{in} \quad B_{1/8}, \quad 0 \le \psi \le 1   \quad \mbox{e} \quad \psi \equiv 1 \quad \mbox{in} \quad B_1 \backslash B_{1/2}
$$
and consider the test function $g$ in $B_1$ given by 
$$ g(x) := \left\{
\begin{array}{ccc}
\min \{v, C \left \{ v(0) + d^{1-n/q}\|f_+\|_{L_{q}(\Omega)}\right \} \psi & \mbox{in} & B_{1/2} \\
v & \mbox{in} & B_1 \backslash B_{1/2}
\end{array}
\right.
$$
We immediately see that 
$$ 
	B_{1/2} \supset \Pi: = \{ y \in B_{1/2}:  C \left \{ v(0) + d\|f_{+}\|_{L^{q}(\Omega)} \right \} \cdot \psi(x) < v(y)\} \supset B_{1/8}.
$$
From minimality of $v$, we estimate
\begin{eqnarray} \label{EMi}
\Sigma &:=&   \int_{\Pi} \lambda_{+}(x_0+dx)(1 -\chi_{\{g>0\}}) + d \cdot f_{+}(x_0+dx)[v(x) -g(x)]dx \nonumber \\ 
&\le& \int_{\Pi} (\langle A_{+}^{*} \nabla g, \nabla g \rangle - \langle A_{+}^{*} \nabla v, \nabla v \rangle) dx \nonumber\\
&\le& \nonumber  
\Lambda \int_{\Pi} (| \nabla g|^{2} )dx \nonumber \\
&\le&  \Lambda \left [ C \left \{ v(0) + d^{1-n/q}\|f_{+}\|_{L^{q}(\Omega)} \right \} \cdot \|\nabla \psi\|_{L^{2}(\Omega)} \right ]^{2} \nonumber \\
&\le&  C v(0)^{2} + C \Big( d^{1-n/q} \|f_{+} \|_{L^{q}(\Omega)}\Big)^{2}. \nonumber \\
\end{eqnarray}
We now aim towards a lower bound for the LHS of (\ref{EMi}). For that we use the strict positiveness of  $\lambda$, namely the fact that  $\lambda \ge c_\lambda > 0$.  Initially we estimate
\begin{equation} \label{EMi2}
\int_{\Pi} \lambda_{+}\lambda(x_0+dx)(1 -\chi_{\{g>0 \} })dx = \int_{\Pi} \lambda_{+}\lambda(x_0+dx) \chi_{\{g=0 \}}dx \ge \lambda_{+} c_\lambda |B_{1/8}|.
\end{equation}
Now, applying once more H\"older inequality and the fact that $ \Pi \subset B_{1/2} $ we find
$$
0\le v-g\le v \le C\{v(0) + d^{1-n/q}\|f_+\|_{L(\Omega)}\} \ \ \mbox{in} \ \ \Pi.
$$
Thus, we  can  estimate 
\begin{eqnarray} \label{EMi3}
\Sigma_2 &:=&  - \int_{\Pi} d f_{+}(x_0 + dx) [ v(x) - g(x)] dx \nonumber \\ 
& \le & d \| v-g \|_{L^{q/(q-1)}(\Pi)}\| f_{+}(x_0+dx) \|_{L^q(\Pi)} \nonumber \\
& \le & d \| C \left \{ v(0) + d \|f_{+}\|_{L^{q}(\Omega)} \right \} \|_{L^{q/(q-1)}(\Pi)}\| f_{+}(x_0+dx) \|_{L^q(\Pi)} \nonumber \\
&\le & d^{1-n/q} C \left \{ v(0) + d^{1-n/q} \|f_{+}\|_{L^{q}(\Omega)} \right \} \|f_{+}\|_{L^{q}(\Omega)}
\end{eqnarray} 

Combining (\ref{EMi}), (\ref{EMi2}) and (\ref{EMi3}) we find
\begin{equation}
C v(0)^{2} + C d^{1-n/q}v(0)  \ge \lambda_{+} c_\lambda |B_{1/8}| - Cd^{2(1-n/q)} \|f_+\|_{L^q(\Omega)}^2
\end{equation}
Thus, if $0<d<\delta(n,\lambda_{+},\lambda(x), \Lambda, \|f_{+} \|_{L^{q}(\Omega)}) \ll 1$ we obtain.
$$ 
	v(0) \ge c(n,\lambda_{+}, \lambda(x), \Lambda, \|f_{+} \|_{L^{q}(\Omega)}) > 0,
$$
as desired. The proof of the Theorem is concluded.
\end{proof}

In the sequel,  we iterate linear growth established in Theorem \ref{CL} as we obtain a stronger non-degeneracy  property for minima of $\mathcal{F}$ near the free boundary.

\begin{theorem} \label{TNDF}
Let $u$ be a local minimum of the problem \eqref{F1}, with $f \in L^{q}(\Omega)$,  $q > n$. Given $\Omega' \Subset \Omega$, there exist a constant $\underline{c}>0$ depending only on dimension, ellipticity, $\Omega'$ and the data of the problem,  such that
$$ 
	\sup_{B_r(x_0)} u^{+} \ge \underline{c}r,
$$
for $x_0 \in \overline {\{u>0\}} \cap \Omega' $ any $0<r < \mathrm {dist}(\partial \Omega', \partial \Omega)$.
\end{theorem}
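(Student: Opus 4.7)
The plan is to argue by contradiction, building on Theorem~\ref{CL}. First I would reduce to the case $x_0 \in F^+(u)\cap\Omega'$: if $x_0 \in \{u>0\}$ with $d_0 := \mathrm{dist}(x_0,F^+)\ge r/2$, then Theorem~\ref{CL} immediately gives $u(x_0)\ge c_0 r/2$; otherwise one finds $z_0\in F^+(u)\cap B_{r/2}(x_0)$, and since $B_{r/2}(z_0)\subset B_r(x_0)$, proving the bound for $(z_0,r/2)$ implies it for $(x_0,r)$ up to a harmless constant. So the real content is at free boundary points.

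Fix $x_0 \in F^+(u)\cap\Omega'$ and set $M:=\sup_{B_r(x_0)} u^+$. The plan is to test minimality against the competitor
$$\tilde u := u - u^+\,\eta,$$
with $\eta\in C^1_c(B_r(x_0);[0,1])$, $\eta\equiv 1$ on $B_{r/2}(x_0)$, $|\nabla\eta|\le C/r$. Then $\tilde u\in H^1_\phi(\Omega)$, $\tilde u=u$ outside $B_r(x_0)$, and on $B_{r/2}(x_0)$ one has $\tilde u = -u^-\le 0$. The crucial point is that on $E := \{u>0\}\cap B_{r/2}(x_0)$ the sign of $u$ flips, so the potential term gains $\gamma(x)(\lambda_+-\lambda_-)$ per unit volume. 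The inequality $\mathcal{F}(u)\le\mathcal{F}(\tilde u)$, together with pointwise bounds $|u|\le M$ on $\mathrm{supp}(\eta)$, a Caccioppoli-type control of $\int_{B_r}|\nabla u|^2$, and H\"older's inequality for the source term (using $q>n$), should yield
$$c_\gamma(\lambda_+-\lambda_-)\,|E| \;\le\; C\bigl(M^2 r^{n-2}+ M\,r^{n(1-1/q)}\bigr).$$
If $M\le \underline{c}\,r$ with $\underline c$ small, the right-hand side is bounded by $C\underline{c}\,r^n$ for $r$ small, since $q>n$ makes the exponent of $r$ in the second term strictly larger than $n$.

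To close the argument I would then install a lower density estimate $|E|\ge c_\star r^n$ via Theorem~\ref{CL}: at any $y\in \{u>0\}\cap B_{r/4}(x_0)$, the ball $B_{d(y)/2}(y)$ lies inside $\{u>0\}$ with $u\ge c_0 d(y)/2$ there; iterating this and invoking the modulus of continuity from Theorem~\ref{reg} (to ensure that $\{u>0\}$ charges every ball around $x_0\in\overline{\{u>0\}}$ with definite mass) should recover positive density. Matching the two bounds gives $c_\star\le C\underline c$, contradicting the choice of $\underline c$ arbitrarily small and forcing $M\ge \underline c\,r$.

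The hardest step will be the density lower bound $|E|\ge c_\star r^n$. The upper estimate from the competitor is essentially routine, but transferring the pointwise linear growth of Theorem~\ref{CL} into a uniform volume-density statement around an arbitrary free boundary point is delicate, because the positive set near $F^+(u)$ could, a priori, be thin cusps; controlling this geometry is exactly where the iteration of Theorem~\ref{CL} (via chains of balls fully contained in $\{u>0\}$) has to be carried out carefully, tracking that the constants remain universal.
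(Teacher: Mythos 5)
Your approach is genuinely different from the paper's. The paper proves Theorem~\ref{TNDF} via a Caffarelli-style polygonal (doubling) argument: it first shows, by a blow-up compactness argument combined with Harnack's inequality, a dichotomy estimate $\sup_{B_{d(x)}(x)} u \ge (1+\delta_0)u(x)$ for a universal $\delta_0>0$, where $d(x)=\mathrm{dist}(x,F^+)$; it then chains this to build a polygonal $\{x_n\}$ along which $u$ grows geometrically while, thanks to Theorem~\ref{CL}, each step length $|x_n-x_{n-1}|=d(x_{n-1})$ is controlled by $u(x_{n-1})$. Since $u$ is bounded, the chain must leave $B_r(x_0)$ after finitely many steps, and the last point inside the ball already carries the linear growth. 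This route never needs to estimate the measure of $\{u>0\}$.

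Your proposal instead tests minimality against the truncation $\tilde u = u - u^+\eta$ — an Alt--Caffarelli--Friedman style energy comparison. The resulting inequality $c_\gamma(\lambda_+-\lambda_-)\,|E|\le C\bigl(M^2 r^{n-2}+M\,r^{n(1-1/q)}\bigr)$ is correctly derived (modulo the minor point that the Caccioppoli bound for $\int_{B_r}|\nabla u^+|^2$ requires the $L^\infty$ control on a slightly larger ball, which is easily arranged). The genuine gap is exactly where you flag it: the lower density bound $|E|\ge c_\star r^n$. This bound is not obtainable from Theorem~\ref{CL} and the modulus of continuity alone, and the route you sketch is circular. Theorem~\ref{CL} gives $u(y)\ge c_0\,d(y)$, but to \emph{start} a chain of balls contained in $\{u>0\}$ with definite radii you first need a point $y\in B_{r/4}(x_0)$ with $d(y)\gtrsim r$ — and producing such a $y$ is precisely the content of nondegeneracy. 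Moreover, the modulus of continuity from Theorem~\ref{reg} bounds $u$ only from above near $F^+$, so it cannot rule out that the positivity set is a cusp of negligible measure adjacent to $x_0$. In the standard ACF framework, the positive-density estimate is a \emph{corollary} of nondegeneracy combined with the regularity of $u$, not an ingredient of its proof. So as written, the argument cannot be closed without importing nondegeneracy, and the chain construction that would salvage it leads you essentially back to the paper's polygonal argument — where one should chain the \emph{growth} of $u$ (as the paper does), rather than attempting to accumulate volume.
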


\begin{proof}
By continuity, Theorem \ref{reg}, it suffices to show the thesis of the Theorem within the positive phase
$$ 
	\Omega_{0}^{+} : = \{u>0\} \cap \Omega'. 
$$
Initially, we start off by showing the existence of a number $\delta_0 > 0$ depending only on dimension, ellipticity, $\Omega'$ and the data of the problem, such that if $x \in \{u > 0\} \cap \Omega'$, then there holds 
\begin{equation} \label{TNDFEq01}
\sup_{B_{d(x)}} u \ge (1 + \delta_0)u(x)
\end{equation}
where $d(x):=\mbox{dist}(x, F^{+})$. In order to verify (\ref{TNDFEq01}) we assume, for the purpose of contradiction, that no such a $\delta_0$ exist. If so, it would be possible to find sequences 
$$
	\delta_j = \text{o}(1) \quad \text{ and } \quad  x_j \in \{u>0\} \cap \Omega'
$$
satisfying 
\begin{equation} \label{AF}
\sup_{B_{d_j}(x_j)} u \le (1 + \delta_j)u(x_j)  
\end{equation}
for $d_j : = \mbox{dist}(x_j,F^{+})=  \text{o}(1)$  as $ j \rightarrow \infty.$ Define the normalized sequence of functions $\rho_j \colon B_1\rightarrow \mathbb{R}$ given by 
$$ \rho_j(z):= \dfrac{u(x_j + d_jz)}{u(x_j)}. $$
Clearly $\rho_j(0)=1$ and from (\ref{AF}), we have
\begin{equation}
0 \le \rho_j \le 1 + \delta_j \quad \mbox{in} \quad B_1.
\end{equation}
Notice that $\rho_j$ satisfies
\begin{equation}
\mbox{div}(A^{*} \nabla \rho_j) = \dfrac{d_{j	}^{2}}{u(x_j)} f_{+}(x_j + d_jz),
\end{equation}
in the distributional sense in $B_1$ where $A^{*}(z)=A_+(x_j + d_jz)$. Thus taking into account the linear growth established in Theorem  \ref{CL}, we obtain
\begin{equation} \label{NDF2}
 |\mbox{div}(A^{*} \nabla \rho_j)| \le C d_j f_{+}(x_j + d_jz) \quad \mbox{in} \quad B_1. 
\end{equation}
By elliptic regularity estimates, we deduce the sequence $\{\rho_j\}$ is equicontinuous in $B_1$. Thus, up to a subsequence $\rho_j \rightarrow \rho$ locally uniformly in $B_1$. Again, by Harnack inequality, for any $x$ such that $|x| \le r < 1$, there holds
\begin{equation}
0 \le [1+ \delta_j] - \rho_j(x) \le C_r \Bigl( [1+\delta_j] - \rho_j(0) -d_j^{1-n/q}\|f\|_{L^{q}(\Omega)} \Bigr) = C_r\cdot  \text{o}(1).
\end{equation}
Letting $j \rightarrow \infty$ in the above estimate, we conclude the limiting blow-up  function $ \rho$ is identically $1$ in $B_1$. On the other hand, let $y_j \in F^{+}$ such that $d_j=|x_j-y_j|$. Up to a subsequence, there would hold
$$ 
	1+ \text{o}(1) = \rho_j \left (\dfrac{y_j-x_j}{d_j} \right )=0,
$$
which clearly gives a contradiction for $j \gg 1$. We have shown the estimate (\ref{TNDFEq01}) does hold true. The conclusion of the proof of Theorem \ref{TNDF} now follows by Caffarelli's polygonal type of argument. That is, we construct a polygonal along which $u$ grows linearly. Starting from $x_0$ and finding a sequence of points $\{x_n\}_{n}$ such that:
\begin{enumerate}
\item $u(x_n) \ge (1 + \delta_0)^{n}u(x_0)$
\item $|x_n - x_{n-1}| = \mbox{dist}(x_{n-1},F^{+})$
\item$ u(x_n) - u(x_{n-1}) \ge  c|x_n-x_{n-1}|$. In	 particular, $|u(x_n) - u(x_0)| \ge c|x_n-x_0|.$
\end{enumerate}

Since $u(x_n)\rightarrow \infty$ as $ n \rightarrow \infty $, there exist a last $x_{n_0}$ in the ball $B_r(x_0)$. Then, for such a point, we have:
$$ 
	|x_{n_0} -x_0| \ge \underline{c}  \cdot r.
$$
Thus,
$$ 
	\sup_{B_r(x_0)} u  \ge u(x_{n_0})\ge u(x_0) + c|x_{n_0} -x_0| \ge  \underline{c}  \cdot r.,
$$
and the proof is concluded.
\end{proof}

%%%%%%%%%%%%%%%%%%%%%%%%%%%	

\section{Regularity in continuous media} \label{FHR}

In this Section we consider the case where $A_{+} = A_{-} = A(x)$ is  merely a continuous function. As warned in the Introduction, under mere continuity assumption on the media, even solutions to the homogeneous equation $\div (A(x) \nabla u) = 0$ may fail to have bounded gradients. In view of such an obstruction, our ultimate goal in this Section is to show that minima of $\mathcal{F}_A$ are locally $C^{0,1^{-}}$, which is an optimal asymptotic  estimate provided the media is just continuous. 

Our strategy to prove such an estimate is to interpret the Alt-Caffarelli-Friedman theory developed in \cite{ACF} as the {\it tangential} free boundary problem for small coefficient perturbations and sources, see \cite{Caff01} and also \cite{T1, TU, T2, T3, PT} for further applications of geometric tangential methods. We shall establish the following more general result:

\begin{theorem} \label{AC} 
Assume $f_{\pm} \in L^n(\Omega)$. Given an $ \alpha \in (0,1)$, there exists an $ \varepsilon > 0$  depending on $\alpha$ and the data of the problem such that if
$\| A(x) - A_0\|_{L^2(\Omega)} \le \varepsilon $, where $A_0$ is a constant matrix, then minimizers of the functional $\mathcal{F}_A$ are locally  $C^{\alpha}$. Furthermore, for any $\Omega' \Subset \Omega$, there exists a constant $C$ depending on $\Omega'$, $\alpha$ and the data of the problem, such that
$$
	\|u\|_{C^{\alpha}(\Omega')} \le C.
$$
\end{theorem}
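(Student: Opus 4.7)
The plan is to deploy Caffarelli's geometric tangential analysis, treating the Alt--Caffarelli--Friedman (ACF) functional associated to the constant matrix $A_0$ as the tangential model. Once $A_0$ is trivialized by a linear change of variables, minimizers of $\mathcal{F}_{A_0,\lambda_\pm,0}$ are Lipschitz, because the ACF monotonicity formula is available precisely in the constant-coefficient regime. The strategy is then to transfer this Lipschitz estimate, with an arbitrarily small loss, to the perturbed problem.

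The cornerstone is an approximation (flatness) lemma: given $\delta>0$, there exists $\varepsilon>0$ depending on $\delta$ and the data such that if $u$ minimizes $\mathcal{F}_A$ in $B_1$ with $\|u\|_{L^\infty(B_1)}\le 1$, $\|A-A_0\|_{L^2(B_1)}\le \varepsilon$ and $\|f_\pm\|_{L^n(B_1)}\le \varepsilon$, then there is a minimizer $v$ of $\mathcal{F}_{A_0,\lambda_\pm,0}$ with $\|u-v\|_{L^\infty(B_{1/2})}\le \delta$. The proof is by contradiction and compactness: supposing the claim to fail, extract sequences $u_k,A_k,f_k$ with $A_k\to A_0$ in $L^2$ and $f_k\to 0$, and use the universal H\"older estimate of Theorem \ref{reg} together with the $H^1$ bound of Remark \ref{rmk H1} to pass to a subsequential limit $u_\infty$, which must be identified as a minimizer of $\mathcal{F}_{A_0,\lambda_\pm,0}$ and therefore Lipschitz.

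The $C^{\alpha}$ estimate for any $\alpha<1$ then follows by iteration. Fix such an $\alpha$ and let $L$ denote the universal Lipschitz constant for tangential minimizers. Choose $\rho\in(0,1)$ so that $L\rho\le \tfrac12\rho^{\alpha}$, and then $\delta=\tfrac12\rho^{\alpha}$, corresponding to an $\varepsilon>0$ from the approximation lemma. This yields $\sup_{B_{\rho}}|u-v(0)|\le \rho^{\alpha}$. The rescaled function $u_{1}(x):=\rho^{-\alpha}(u(\rho x)-v(0))$ is a minimizer on $B_{1}$ of a functional of the same form, whose matrix $A(\rho\,\cdot)$ inherits the smallness once one centers $A_{0}=A(x_{0})$ and exploits continuity of $A$, and whose source picks up a factor $\rho^{1-\alpha-n/q}$, hence still enjoys the required $L^{q}$ smallness for $q\ge n$ and $\alpha<1$. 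Iterating produces a Cauchy sequence of constants $c_{k}\to c_{\infty}$ with $\|u-c_{k}\|_{L^{\infty}(B_{\rho^{k}})}\le \rho^{k\alpha}$ and $|c_{k+1}-c_{k}|\le C\rho^{k\alpha}$, which is the pointwise $C^{\alpha}$ modulus with a universal constant.

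The hardest point lies in the compactness step of the approximation lemma, where the limit $u_\infty$ must be shown to actually minimize the tangential functional. Lower semicontinuity of the nonconvex quadratic term is extracted as in Theorem \ref{ET}, using that $u_{k}\to u_{\infty}$ uniformly on compact sets and the weak $H^{1}$-convergence of the gradients. The delicate ingredient is the passage to the limit in $\int F_{0}(u_{k})\,dx$: this requires that the positivity sets $\{u_{k}>0\}$ do not develop a layer of positive measure collapsing onto $\{u_{\infty}=0\}$, which is excluded precisely by the nondegeneracy Theorem \ref{TNDF} together with the linear growth Theorem \ref{CL}. A secondary technical subtlety is that the $L^{2}$-smallness of $A-A_{0}$ is not scale-invariant under the natural rescaling, a defect remedied by recentering $A_{0}=A(x_{0})$ at each iteration and invoking continuity of $A$ to produce the needed smallness at every dyadic scale; the shift by $v(0)$ in the rescaling likewise only displaces the free boundary by a controlled amount, which is absorbed by considering a suitably translated tangential problem.
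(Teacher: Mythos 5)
Your overall strategy---geometric tangential analysis with the ACF functional as tangential model, a compactness approximation lemma, and a dyadic iteration---is indeed the route the paper follows, and the compactness step is essentially correct. But there are three places where your execution goes astray.

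\emph{The vertical shift in the iteration is not admissible.} You define $u_{1}(x)=\rho^{-\alpha}\bigl(u(\rho x)-v(0)\bigr)$ and assert it minimizes a functional of the same form. It does not: the functional \eqref{F1} is selected by the sign of the competitor, so subtracting the constant $v(0)$ replaces $\chi_{\{u>0\}}$ by $\chi_{\{u>v(0)\}}$ and destroys the minimization structure unless $v(0)=0$. The paper sidesteps this entirely by normalizing the origin to be a free boundary point, so that both $u(0)=0$ and the approximating Lipschitz function satisfies $h(0)=0$; the rescaling is then $\tilde u(x)=\rho^{-k\alpha}u(\rho^{k}x)$ with no additive constant, and $0$ remains a free boundary point of $\tilde u$. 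The $C^{\alpha}$ estimate is first obtained only along $\partial\{u>0\}\cup\partial\{u<0\}$. Your proposal never explains how to pass from this free-boundary estimate to an interior one; the paper supplies the missing geometric argument by combining the Schauder-type estimate \eqref{shauderestimate} with the Harnack inequality and the already-proven H\"older bound at the nearest free boundary point, which removes the $d^{-\alpha}$ blow-up as $d\to 0$.

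\emph{The appeal to nondegeneracy is both unnecessary and unavailable.} You worry that $\int F_{0}(u_{k})\,dx$ might not pass to the limit because a layer of positivity could collapse onto $\{u_{\infty}=0\}$, and propose to invoke Theorems \ref{CL} and \ref{TNDF}. Those theorems require $f\in L^{q}$ with $q>n$, whereas Theorem \ref{AC} only assumes $f_{\pm}\in L^{n}$, so they are not at your disposal. More importantly, they are not needed. Since $F_{0}(u)=\lambda_{-}|\Omega|+(\lambda_{+}-\lambda_{-})\chi_{\{u>0\}}$ with $\lambda_{+}>\lambda_{-}$, lower semicontinuity reduces to $\liminf_{k}|\{u_{k}>0\}|\ge|\{u_{\infty}>0\}|$, which is a direct application of Fatou's lemma together with the a.e.\ convergence $u_{k}\to u_{\infty}$: on $\{u_{\infty}>0\}$ one has $u_{k}>0$ for $k$ large. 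The degeneracy you fear is precisely neutralized by the convention that $\{u=0\}$ is grouped with the negative phase (the $\lambda_{-}$ side), which is the cheaper one.

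\emph{The re-centering via continuity of $A$ changes the hypotheses.} You correctly notice that $\|A-A_{0}\|_{L^{2}}$ is not scale invariant, but the remedy you propose---re-centering $A_{0}=A(x_{0})$ at each scale and invoking continuity of $A$---is the argument for Corollary \ref{C1Loc}, not for Theorem \ref{AC}, which makes no continuity assumption on $A$. In the paper's iteration the smallness is kept at scale $\rho^{k}$ simply because it is measured in a scale-invariant (in fact monotone) norm on shrinking balls; one should then either state the smallness hypothesis so that it is stable under the rescaling, or, as in Corollary \ref{C1Loc}, separately assume continuity.
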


The proof of Theorem \ref{AC} will be delivered is the sequel. It shall be divided in three main steps. Initially we establish a powerful approximating Lemma that 
says that if the coefficients do not oscillate much and the source function has small norm, then the graph of a minimum of $\mathcal{F}_A$ is close to a graph of a Lipschitz function, with uniformly bounded norm. Next we show a discrete $C^{0,\alpha}$ estimate at free boundary points which ultimately will lead to the aimed regularity along the free boundary. The final step is to prove, vie geometric considerations, that $C^{0,1^{-}}$ smoothness holds near the free boundary -- not only at free boundary points. 

\begin{lemma} \label{Ind}
For each $\varepsilon >0$ given, there exists a $\delta > 0 $ and a universal constant $C_0 >0$  such that if $u$ is a minimum of $\mathcal{F}_A$ in $B_1$ where 
\begin{equation}
\|A_k - Id\|_{L^2(B_1)} +  \|f_+\|_{L^n(B_1)} + \|f_-\|_{L^n(B_1)} \le \delta,
\end{equation}
and the origin is a free boundary point,  then we can find a Lipschitz function $h$ in $B_{1/2}$, with $\|h\|_{\mathrm{Lip}}(B_{1/2}) \le C_0$, $h(0)=0$ and 
\begin{equation} \label{Apro}
 \| u-h\|_{L^\infty(B_{1/2})}  \le \varepsilon.
\end{equation}
\end{lemma}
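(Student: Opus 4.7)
\textbf{Proof proposal for Lemma \ref{Ind}.} The strategy is the standard compactness/contradiction scheme for perturbative regularity theorems. Suppose the statement fails. Then there exist $\varepsilon_0>0$ and sequences of matrices $A_k$, sources $f_\pm^k$ and minimizers $u_k$ of the associated functional $\mathcal{F}_{A_k}$ in $B_1$, with $0$ a free boundary point of $u_k$, such that
\[
\|A_k-\mathrm{Id}\|_{L^2(B_1)}+\|f_+^k\|_{L^n(B_1)}+\|f_-^k\|_{L^n(B_1)}\le \tfrac{1}{k},
\]
yet $\|u_k-h\|_{L^\infty(B_{1/2})}>\varepsilon_0$ for every Lipschitz $h$ on $B_{1/2}$ with $h(0)=0$ and $\|h\|_{\mathrm{Lip}(B_{1/2})}\le C_0$, where $C_0$ is the universal Lipschitz bound arising from the Alt--Caffarelli--Friedman theory (to be specified at the end). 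Here the underlying boundary data $\phi_k:=u_k|_{\partial B_1}$ are controlled universally in $H^1\cap L^\infty$ by Theorem \ref{LIB} and Remark \ref{rmk H1}.

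Next I would extract a limit. By Theorem \ref{LIB}, Remark \ref{rmk H1}, and Theorem \ref{reg}, the sequence $\{u_k\}$ is bounded in $L^\infty(B_1)\cap H^1(B_1)$ and equi--H\"older continuous on compact subsets of $B_1$. Hence, up to a subsequence, $u_k\to u_\infty$ locally uniformly in $B_1$ and weakly in $H^1_{\mathrm{loc}}(B_1)$. Since each $u_k$ vanishes at the origin (as $0$ is a free boundary point), $u_\infty(0)=0$.

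The crux is to identify $u_\infty$ as a local minimizer in $B_{3/4}$ of the Alt--Caffarelli--Friedman functional
\[
\mathcal{J}(v):=\int_{B_{3/4}}\left\{\tfrac{1}{2}|\nabla v|^2+\lambda_+(x)\chi_{\{v>0\}}+\lambda_-(x)\chi_{\{v\le 0\}}\right\}dx.
\]
For the lower bound, I would use the smallness $\|A_k-\mathrm{Id}\|_{L^2}\to 0$ together with the uniform $L^\infty$ bound on $\nabla u_k$ in $L^2$ (Cauchy--Schwarz) to replace $\langle A_k\nabla u_k,\nabla u_k\rangle$ by $|\nabla u_k|^2$ up to $o(1)$; then weak lower semicontinuity handles the gradient term, while uniform convergence plus the argument used in Theorem \ref{ET} (Egorov plus letting $\delta\to 0^+$ in the sets $\{u_\infty>\delta\}$) treats the $\lambda_\pm\chi_{\{v>0\}}$ term. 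The source terms vanish in the limit because $\|f_\pm^k\|_{L^n}\to 0$ and $u_k$ is bounded. For the upper bound, given any competitor $w$ with $w-u_\infty\in H^1_0(B_{3/4})$, I would build $w_k:=w+(u_k-u_\infty)\eta$ for a suitable cut-off $\eta$ (so $w_k-u_k\in H^1_0(B_{3/4})$), exploit minimality of $u_k$ against $w_k$, and pass to the limit using the same three ingredients: uniform convergence of $u_k$ to $u_\infty$, $L^2$-smallness of $A_k-\mathrm{Id}$, and vanishing of the sources.

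Once $u_\infty$ is identified as a minimizer of $\mathcal{J}$, the Alt--Caffarelli--Friedman regularity theory \cite{ACF} yields that $u_\infty$ is Lipschitz continuous in $B_{1/2}$, with norm bounded by a constant $C_0$ depending only on the data. Choosing this very constant at the start and setting $h:=u_\infty$ produces a Lipschitz graph with $h(0)=0$, $\|h\|_{\mathrm{Lip}(B_{1/2})}\le C_0$, and $\|u_k-h\|_{L^\infty(B_{1/2})}\to 0$, contradicting the contrary assumption for $k$ sufficiently large. The most delicate technical point is the identification of the limit as a minimizer, because the characteristic functions $\chi_{\{u_k>0\}}$ are not stable under weak convergence; this is precisely what forces the careful Egorov-type argument, and is the main obstacle to make rigorous.
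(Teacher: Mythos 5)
Your proposal is correct and follows essentially the same route as the paper: a compactness/contradiction scheme in which uniform $L^\infty$, $H^1$, and $C^{0,\alpha_0}$ bounds from Theorem \ref{LIB}, Remark \ref{rmk H1}, and Theorem \ref{reg} yield a subsequential limit $u^\star$, which one identifies as a minimizer of the Alt--Caffarelli--Friedman functional $\mathcal{F}_{\mathrm{Id},\lambda_\pm,0}$, whence the monotonicity-formula Lipschitz estimate from \cite{ACF} produces the approximating function $h := u^\star$. The one place where you go beyond the paper's exposition is the verification that $u^\star$ actually minimizes: the paper only records the lower-semicontinuity computation (the liminf inequality against $u^\star$) and then asserts minimality, whereas you also sketch the upper-bound direction by gluing competitors $w_k := w + (u_k - u_\infty)\eta$. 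That step is indeed necessary to close the argument, and the gluing needs some care --- one should cut off in an annulus where strong $L^\infty$ convergence of $u_k$ to $u_\infty$ lets you control the interpolation error and the perturbation of the phase sets $\{w_k>0\}$ --- but it is a standard $\Gamma$-convergence-type argument and your outline is sound.
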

\begin{proof}
Let us suppose, for the purpose of contradiction, that the thesis of the Lemma fails. If so, there would exist a positive number $\varepsilon_0 >0$ and sequences $u^k$, $f_-^k$ ,$f_+^k$, and  $A_k $, where $u^k$ is a minimum of the corresponding functional  $\mathcal{F}_{A_k, f^k_{\pm}}$, with
\begin{equation}
\|A_k - Id\|_{L^2(B_1)} + \|f_+^k\|_{L^n(B_1)} +  \|f_-^k\|_{L^n(B_1)} =  \text{o}(1) \quad \mbox{as} \quad k\rightarrow 0;
\end{equation}
however, for any Lipschitz function $h$, satisfying $\|h\|_{\mathrm{Lip}}(B_{1/2}) \le C_0$   and $h(0)=0$ for a constant  $C_0$ to be fixed {\it a posteriori}, we verify
\begin{equation} \label{contract}
 \| u-h \|_{L^\infty(B_{1/2})} > \varepsilon_0.
\end{equation}

From Theorem \ref{LIB}, Remark \ref{rmk H1} and Theorem \ref{reg},  we obtain a uniform control on
$$
	 \|u^k\|_{H^1(\overline{B_{1/2}})} + \|u^k\|_{L^\infty(\overline{B_{1/2}})} + [ u^k  ]_{C^{0,\alpha_0}(\overline{B_{1/2}})} \le K_0.
$$
Thus, up to a subsequence, $u^k$ converges locally uniformly and weakly in $H^1(\overline{B_{1/2}})$  to some function $u^\star$.  In the sequel, we show that the limiting function is a
minimum to the Alt-Caffarelli-Friedman functional $\mathcal{F}_{Id,\lambda_\pm,0}$, studied in \cite{ACF}.  For that we compute
\begin{eqnarray*}
& &\liminf_{k \rightarrow \infty} \hspace{-0.3cm}
\int\limits_{B_{1/2}\cap\{u^k>0\}} \hspace{-0.3cm} \{ |\nabla u^k|^2_{A^k} + \lambda_{+} + f_{+}^{k}u^{k} \}\,dx
+
\hspace{-0.3cm} \int\limits_{B_{1/2}\cap\{u^k \le 0\}} \hspace{-0.3cm} \{ |\nabla u^k|^2_{A_k}+ \lambda_{-} + f_{-}^{k}u^{k} \}\,dx \\
&=& \liminf_{k \rightarrow \infty} 
\int\limits_{B_{1/2}\cap\{u^k>0\}}\hspace{-0.3cm} \{ |\nabla u^k|^2 + \lambda_{+} + f_{+}^{k}u^{k} \}\,dx
+
\hspace{-0.3cm} \int\limits_{B_{1/2}\cap\{u^k \le 0\}}  \hspace{-0.3cm} \{ |\nabla u^k|^2+ \lambda_{-} + f_{-}^{k}u^{k} \}\,dx \\
& + &\liminf_{k\rightarrow \infty} \hspace{-0.3cm}
\int\limits_{B_{1/2}\cap\{u^k>0\}} \hspace{-0.3cm}
 \langle (A^k - Id)\nabla u^k,
\nabla u^k\rangle \,dx + \hspace{-0.3cm}
\int\limits_{B_{1/2}\cap\{u^k \le 0\}} \hspace{-0.3cm}
 \langle (A^k- Id)\nabla u^k,
\nabla u^k\rangle\,dx
\\ &\ge &
\int\limits_{B_{1/2}\cap\{u^\star>0\}}\{ |\nabla u^\star|^2 + \lambda_{+}  \} dx
 + \int\limits_{B_{1/2}\cap\{u^\star \le 0\}} \{ |\nabla u^\star|^2 + \lambda_{-}   \} dx.
  \\ 
 & = & \int\limits_{B_{1/2}} [| \nabla u^\star|^2  + \lambda^2(u^\star)]dx
\end{eqnarray*}
where $\lambda^2$ is like in \cite{ACF}. Thus, indeed, $u^\star$ is a minimum of  the functional studied in \cite{ACF}   and by the uniform convergence of $u^k$ we get $u^\star(0) = 0$.  As a consequence of monotonicity formula, $u^{\star}$ is locally Lipschitz continuous, and for a dimensional constant $C_n$, there holds
$$
	\|u^\star\|_{\mathrm{Lip}}(B_{1/2}) \le C_n  \cdot \|u^\star\|_{L^2(B_{2/3})}.
$$
Clearly, from the convergence above,
$$
	 \|u^\star\|_{L^2(B_{2/3})} \le 2 \|u^k\|_{L^2(B_{2/3})} \le C_1,
$$
where $C_1$ depends only of data of the problem. Set $C_0 = C_n \cdot C_1$. If we take $h = u^\star$ and $k \gg 1$, we reach a contradiction in \eqref{contract} and the Lemma is proven. 
\end{proof}

In the sequel we show a discrete version of the aimed H\"older regularity estimate.

\begin{lemma}
Given $0 < \alpha < 1$, there exist $0< \delta_0 <1$ and $0<\rho<1/2$  depending only upon $\alpha$ and the data of the problem, such that if $u$ is a minimum of $\mathcal{F}_A$ with  $\|A - Id\|_{L^2(B_1)} \le \delta_0,  \|f_+\|_{L^n(B_1)}, \|f_-\|_{L^n(B_1)} \le \delta_0 $. Then 
\begin{equation}\label{Step}
\sup_{x \in B_{\rho^k}} |u(x)|\le \rho^{k\alpha}.
\end{equation}
\end{lemma}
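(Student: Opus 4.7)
The strategy is a discrete iteration driven by the approximation Lemma \ref{Ind}, a classical trick in geometric free boundary theory. First I would freeze the universal Lipschitz constant $C_0$ supplied by Lemma \ref{Ind}, and then select $\rho \in (0,1/2)$ so small that
\[
C_0\,\rho \le \tfrac{1}{2}\rho^{\alpha}, \qquad \text{i.e.,}\qquad \rho^{1-\alpha} \le \tfrac{1}{2C_0},
\]
which is possible since $\alpha < 1$. Next, set $\varepsilon := \tfrac{1}{2}\rho^{\alpha}$ and let $\delta_0 = \delta(\varepsilon)$ be the corresponding smallness parameter produced by Lemma \ref{Ind}. These choices depend only on $\alpha$ and the data of the problem. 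The argument then proceeds by induction on $k$, under the standing assumption (implicit in the statement, and inherited from Lemma \ref{Ind}) that the origin is a free boundary point of $u$ and that $\|u\|_{L^{\infty}(B_1)}\le 1$.

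For the base case $k=1$, Lemma \ref{Ind} supplies a Lipschitz function $h$ on $B_{1/2}$ with $h(0)=0$, $\|h\|_{\mathrm{Lip}(B_{1/2})}\le C_0$, and $\|u-h\|_{L^{\infty}(B_{1/2})}\le\varepsilon$. For $x\in B_{\rho}$ this gives
\[
|u(x)| \le |h(x)| + \varepsilon \le C_{0}\rho + \varepsilon \le \rho^{\alpha}.
\]

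For the inductive step, assume $\sup_{B_{\rho^{k}}}|u|\le\rho^{k\alpha}$ and introduce the renormalization
\[
u_{k}(x) := \rho^{-k\alpha}\,u(\rho^{k}x),\qquad x\in B_{1}.
\]
A direct change of variables shows that $u_{k}$ is a minimizer of the rescaled energy with coefficients
\[
A_{k}(x) := A(\rho^{k}x),\qquad \tilde\lambda_{\pm} := \rho^{k(2-2\alpha)}\lambda_{\pm},\qquad f_{k}^{\pm}(x) := \rho^{k(2-\alpha)} f^{\pm}(\rho^{k}x).
\]
Here $\|u_{k}\|_{L^{\infty}(B_{1})}\le 1$ and $u_{k}(0)=0$ with the origin still a free boundary point. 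The source obeys
\[
\|f_{k}^{\pm}\|_{L^{n}(B_{1})} \;=\; \rho^{k(1-\alpha)}\,\|f^{\pm}\|_{L^{n}(B_{\rho^{k}})} \;\le\; \rho^{k(1-\alpha)}\delta_{0} \;\le\; \delta_{0},
\]
so the source smallness is actually improved along the iteration; the rescaled $\tilde\lambda_{\pm}$ shrink since $\alpha<1$. Hence, provided the matrix smallness in Lemma \ref{Ind} is also available for $A_{k}$, one may apply that lemma to $u_{k}$ exactly as in the base case to conclude $\sup_{B_{\rho}}|u_{k}|\le\rho^{\alpha}$, which unscales to $\sup_{B_{\rho^{k+1}}}|u|\le\rho^{(k+1)\alpha}$ and closes the induction.

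The main difficulty I expect is precisely the verification of the matrix smallness hypothesis at every scale. While the starting assumption guarantees $\|A-\mathrm{Id}\|_{L^{2}(B_{1})}\le\delta_{0}$, the rescaled matrix satisfies
\[
\|A_{k}-\mathrm{Id}\|_{L^{2}(B_{1})}^{2} \;=\; \frac{1}{|B_{\rho^{k}}|}\,\|A-\mathrm{Id}\|_{L^{2}(B_{\rho^{k}})}^{2},
\]
which does not a priori shrink in $k$. To control it I would exploit the standing continuity of $A$ in Section \ref{FHR}, after an initial affine change of variables normalizing $A(0)=\mathrm{Id}$ at the free boundary point; then $\|A-\mathrm{Id}\|_{L^{\infty}(B_{r})}\le\omega_{A}(r)$ for the modulus of continuity of $A$ at $0$, whence
\[
\|A_{k}-\mathrm{Id}\|_{L^{2}(B_{1})} \;\le\; C\,\omega_{A}(\rho^{k}) \;\le\; \delta_{0}
\]
uniformly in $k$, provided we restrict attention to a sufficiently small initial neighborhood. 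In short, the smallness parameter $\delta_{0}$ must be chosen jointly to dominate both the initial $L^{2}$ deviation of $A$ from its center-point value and the oscillation $\omega_{A}$ of $A$ on a small enough scale -- this is the one genuinely non-routine step of the induction.
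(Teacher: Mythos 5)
Your proof reproduces the paper's argument essentially exactly: the choices $\rho=(2C_0)^{-1/(1-\alpha)}$ and $\varepsilon=\tfrac12\rho^{\alpha}$, the base case via the triangle inequality against the approximating Lipschitz function from Lemma \ref{Ind}, and the induction via the renormalization $u_k(x):=\rho^{-k\alpha}u(\rho^k x)$ together with the rescaled data $\tilde A(x)=A(\rho^k x)$, $\tilde\lambda_\pm=\rho^{2k(1-\alpha)}\lambda_\pm$, $\tilde f_\pm(x)=\rho^{k(2-\alpha)}f_\pm(\rho^k x)$ (the paper has a small notational slip here, writing $\rho^{k(2-\alpha)}f_\pm(x)$, but the subsequent $L^n$ computation makes the correct interpretation clear). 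Where you go beyond the paper, and correctly so, is on the matrix-smallness hypothesis. You observe that $\|\tilde A-\mathrm{Id}\|_{L^2(B_1)}^2=\rho^{-kn}\|A-\mathrm{Id}\|_{L^2(B_{\rho^k})}^2$ need not shrink, so the $L^2$ closeness assumed in Lemma \ref{Ind} does not automatically propagate down the scales. The paper sidesteps this by asserting $\|\tilde A-\mathrm{Id}\|_{L^\infty(B_1)}\le\|A-\mathrm{Id}\|_{L^\infty(B_{\rho^k})}\le\epsilon$, i.e. it silently upgrades the stated $L^2$ hypothesis to an $L^\infty$ one, under which the bound is scale-invariant. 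Your fix (normalize $A(0)=\mathrm{Id}$ by an affine change of variables at the free boundary point and bound the rescaled deviation by the modulus of continuity, so $\|\tilde A-\mathrm{Id}\|_{L^2(B_1)}\lesssim\omega_A(\rho^k)$) makes this upgrade explicit and is consistent with how Theorem \ref{AC} is actually applied afterwards, since Corollary \ref{C1Loc} verifies the smallness in $L^\infty$ around a center-point value. In short, your proof is correct, it follows the paper's strategy, and it patches a small but genuine looseness in the paper's treatment of the matrix smallness under iteration.
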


\begin{proof} 
Let us first argue for the case $k=1$.  For $\varepsilon$ to be chosen, let $h$ be the Lipschitz function granted by Lemma \ref{Ind} that is within an $\varepsilon$ distance to $u$ in the $L^\infty$ topology. Define $c_1:=h(0)$. We can estimate
\begin{equation}\label{pp1}
\sup_{x\in B_{\rho}} |u(x)-c_1| \le
\sup_{x\in B_{\rho}} |u(x)-h(x)|+\sup_{x\in B_{\rho}}|h(x)-h(0)|\le
\varepsilon +C_0\rho.
\end{equation}
Now we select
\begin{equation} \label{def rho}	
	\rho :=  (2C_0)^{-\frac{1}{{(1 - \alpha)}}}, \quad 	 \varepsilon := \dfrac{1}{2} \rho^{\alpha},
\end{equation}
and aimed estimate for $k=1$ is proven. We now procedure by induction. Suppose we have verified  \eqref{Step} for  $j = 1, 2, \cdots, k$; we must show the estimate is valid for $ j = k+1$. To this end, define
\begin{equation} \label{scaling}
 \tilde u(x):=\frac{u(\rho^k x)}{\rho^{k\alpha}}. 
\end{equation}
One verifies that $\tilde{u}$ is a minimizer of 
\begin{equation} \label{Ftil}
	\begin{array}{lll}
	 	{\mathcal{\tilde F}}_{\tilde A,, \tilde \lambda_{\pm}, \tilde f_{\pm} }(v)&:= & \displaystyle  \int\limits_{B_1 \cap\{ v> 0\}} \left \{\dfrac{1}{2} |\nabla v(x)|^2_{\tilde A} + \tilde \lambda_{+}(x) + \tilde f_{+}v \right \}  \,dx \\
		&+&
		\displaystyle  \int\limits_{B_1 \cap\{ v\le 0\}} \left \{\dfrac{1}{2}|\nabla v(x)|^2_{\tilde A_-}  +\tilde  \lambda_{-}(x) + \tilde f_{-}v \right \}\,dx,
	\end{array}
\end{equation}	
over $H^1_{\tilde u}(B_1) $, where 
\begin{equation}
\tilde A(x) := A(\rho^k x), \ \tilde \lambda_{\pm}(x) := \rho^{2k(1-\alpha)} \lambda_{\pm}(x) \ \mbox{and} \ \tilde f_{\pm}(x) : = \rho^{k(2-\alpha)}f_{\pm}(x).
\end{equation}
In fact, by change of variables we can see that
\begin{equation}
\begin{array}{lll}
	 	{\mathcal{\tilde F}}_{\tilde A,\tilde \lambda_{\pm}, \tilde f_{\pm}}(\tilde{u})&:= & \dfrac{\rho^{2k(1-\alpha)}}{\rho^{kn}}\Biggl( \ \displaystyle  \int\limits_{B_{\rho^k} \cap\{ v> 0\}} \left \{\dfrac{1}{2} |\nabla u(x)|^2_{ A} +  \lambda_{+}(x) +  f_{+}u \right \}  \,dx \\
		&+&
		\displaystyle  \int\limits_{B_{\rho^k} \cap\{ v\le 0\}} \left \{\dfrac{1}{2}|\nabla u(x)|^2_{ A}  +  \lambda_{-}(x) +  f_{-}u \right \}\,dx\Biggr).
	\end{array}
\end{equation}	
And the assertion follows by minimality of $u$.
Notice that $\tilde u$ is a minimizer in $B_1$ with respect to the elliptic
matrices $\tilde A(x):=A(\rho^k x)$, where
$$ \|\tilde A-Id\|_{L^\infty(B_1)} \le \|A - Id\|_{L^\infty (B_{\rho^k})}\le\epsilon.$$ 
and 
$$ \|\tilde f_\pm \|_{L^n(B_1)}^n = \rho^{k(2-\alpha)n}\int_{B_1} |f_\pm(\rho^kx)|^n dx = \rho^{kn(1-\alpha)}\|f_\pm\|_{L^n(B_{\rho^k})} \le \varepsilon^n.$$
Therefore, applying the case $k=1$ already proved, we obtain
$$ \sup_{x \in B_\rho} | \tilde{u}(x)| \le \rho^\alpha.$$
Equivalently,
$$ 
	\sup_{x \in B_{\rho^{k+1}}}|u(x)| \le \rho^{(k+1)\alpha}. 
$$

\end{proof}

The proof of  Theorem \ref{AC}  at free boundary points now follows.  Indeed fix $x$ in $B_\rho$, where $\rho$ is the universal number declared in \eqref{def rho}. Choose $k$ in such a way that
$$ 
	\rho^{k+1}< |x|\le \rho^k.
$$
Using \eqref{Step}, we conclude  
$$
	 |u(x)| \le \left ( \rho^{-\alpha}\right )  \cdot |x|^\alpha,
$$
as desired. \hfill \qed

\bigskip

At this stage we have obtained $C^{0,1^-}_{\loc}$ regularity  at points along the free boundary. Since $A$ is continuous, $f$ lies in $L^n$, from the Euler-Lagrange equation satisfied in each phase, Proposition \ref{propweakequation}, we also have local $C^{0,1^-}$  regularity estimates away from the free boundary -- see for instance \cite[Theorem 4.2]{T}.  However such a local H\"older estimate, in principle, deteriorates as one approaches the free boundary. In fact, if $x_0$ is a generic point in the set of positivity of $u$, say, and $d := \dist(x_0, \partial \{u>0\})$, then Shauder type estimate gives
\begin{equation} \label{shauderestimate}
\|u\|_{C^{0,\alpha}(B_{d/2}(x_0))} \le \dfrac{C(n,\lambda, \Lambda, \|f_+\|_{L^n} ,\alpha)}{d^\alpha}\|u\|_{L^\infty(B_{2d/3}(x_0))}.
\end{equation}

In the sequel, we will show a universal geometric argument that overcome such a difficulty, even though no smoothness information on the free boundary is prior known. 

The argument starts by applying Harnack inequality on \eqref{shauderestimate}, as to refined the Shauder H\"older estimate \eqref{shauderestimate} to
\begin{equation} \label{SE and Harnack}
\|u\|_{C^{0,\alpha}(B_{d/2}(x_0))} \le \dfrac{C_1}{d^\alpha} \left \{ u(x_0) + d\|f\|_{L^n(B_{2d/3}(x_0))}\right \}.
\end{equation}
Now, let $y_0$ be a free boundary point that realizes the distance, i.e.,
$$
	d := \dist(x_0, \partial \{u>0\}) = |x_0 - y_0| \ll \dist (x_0, \partial \Omega).
$$

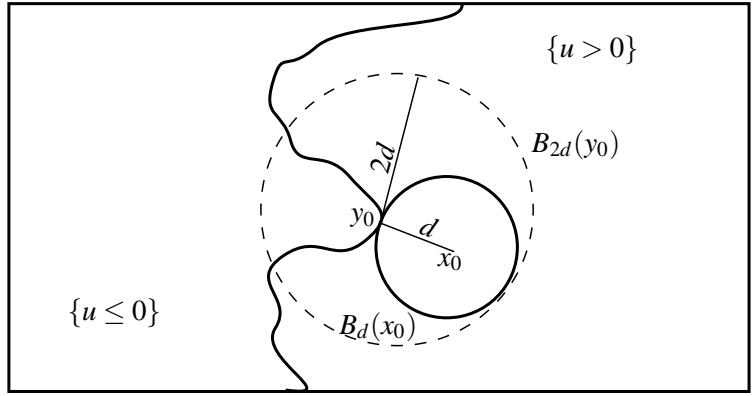
\begin{figure}[!h]
\centering
\scalebox{1} % Change this value to rescale the drawing.
{

\begin{pspicture}(0,-2.6590133)(9.88,2.6390135)
\psbezier[linewidth=0.04](6.044045,2.6190133)(6.1,2.2713013)(4.5277443,2.4103038)(4.1050467,2.0426435)(3.6823494,1.6749834)(3.662391,2.0418715)(3.5260675,1.56564)(3.389744,1.0894084)(3.6566293,1.2082454)(3.7656898,0.7959683)(3.8747504,0.3836912)(4.2092223,0.7101972)(4.546211,0.3210444)(4.8831997,-0.06810843)(5.200924,-0.16271722)(4.7667933,-0.52703404)(4.332662,-0.89135087)(4.1625667,-0.39600283)(3.6638827,-0.73905367)(3.1651986,-1.0821044)(3.8222706,-1.293778)(3.5620756,-1.6973823)(3.3018804,-2.1009865)(4.502354,-2.6390135)(3.7,-2.5209866)
\psframe[linewidth=0.04,dimen=outer](9.88,2.6390135)(0.0,-2.5609865)
\usefont{T1}{ptm}{m}{n}
\rput(1.421455,-1.4959866){$\{u\le0\}$}
\usefont{T1}{ptm}{m}{n}
\rput(7.7714553,2.0040135){$\{u > 0 \}$}
\pscircle[linewidth=0.04,dimen=outer](5.84,-0.6209866){0.96}
\pscircle[linewidth=0.02,linestyle=dashed,dash=0.16cm 0.16cm,dimen=outer](5.18,-0.12098658){1.82}
\usefont{T1}{ptm}{m}{n}
\rput(5.891455,-0.8159866){$x_0$}
\usefont{T1}{ptm}{m}{n}
\rput(4.731455,-0.23598658){$y_0$}
\psline[linewidth=0.02cm](5.94,-0.6809866)(4.96,-0.3009866)
\usefont{T1}{ptm}{m}{n}
\rput{-20.044744}(0.45948336,1.9089264){\rput(5.611455,-0.35598657){$d$}}
\psline[linewidth=0.02](4.96,-0.3009866)(5.46,1.6390134)(5.46,1.6390134)
\usefont{T1}{ptm}{m}{n}
\rput{74.75471}(4.244404,-4.3669963){\rput(4.961455,0.5840134){$2d$}}
\usefont{T1}{ptm}{m}{n}
\rput(4.931455,-1.6759865){$B_d(x_0)$}
\usefont{T1}{ptm}{m}{n}
\rput(7.5614552,0.7440134){$B_{2d}(y_0)$}
\end{pspicture} 
}

\caption{$C^{0,\alpha}$ regularity near the free boundary.}
\end{figure}

Since we have already established a universal control on the  $\alpha$-H\"older norm of $u$ along $\partial \{u>0\}$, we can assure the existence of a constant $C_2>0$ such that
$$
	\sup_{B_{2d}(y_0)} |u| \le C_2 (2d)^\alpha.
$$
In particular, as $x_0 \in B_{2d}(y_0)$, we have
\begin{equation} \label{pointwise est holder}
	u(x_0) \le \left ( 2^\alpha C_2 \right ) \cdot d^\alpha.
\end{equation}
Combining \eqref{pointwise est holder} and \eqref{SE and Harnack} we obtain 
$$
\|u\|_{C^{0,\alpha}(B_{d/2}(x_0))} \le C_1(2^\alpha C_2 + \mbox{diam}(\Omega)^{1-\alpha}\|f\|_{L^n(\Omega)}),
$$
and the local $C^{0,1^{-}}$ regularity of $u$ in $\Omega$, thesis of Theorem   \ref{AC}, is finally concluded.  \hfill \qed

\bigskip

As previously anticipated, Theorem \ref{AC} implies that minimum of $\mathcal{F}_A$, with $A$ continuous is locally $C^{0,1^{-}}$. 

\begin{corollary} \label{C1Loc}
If $A \in C^0(\Omega) $ then minimizers of the problem \eqref{F1} where $A_+(x)=A_-(x)=A(x)$ and $f_+,f_- \in L^n(\Omega)$ are $C^{0,1^-}_{\loc}(\Omega)$.
\end{corollary}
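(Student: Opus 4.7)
The strategy is to reduce to Theorem \ref{AC} by a localization argument exploiting the continuity of $A$. Fix an arbitrary H\"older exponent $\alpha\in(0,1)$ and a subdomain $D\Subset\Omega$. Because $A$ is uniformly continuous on a compact neighborhood of $\overline{D}$ and the $L^n$ integrals of $f_{\pm}$ are absolutely continuous with respect to Lebesgue measure, for any prescribed $\varepsilon>0$ there exists $r_0=r_0(\varepsilon)>0$ such that
\begin{equation*}
\|A-A(x_0)\|_{L^2(B_r(x_0))}+\|f_+\|_{L^n(B_r(x_0))}+\|f_-\|_{L^n(B_r(x_0))}<\varepsilon
\end{equation*}
for every $x_0\in D$ and every $0<r\le r_0$; this uses only the uniform modulus of continuity of $A$, the scaling of $\|A-A(x_0)\|_{L^2}$ by $|B_r|^{1/2}$, and the uniform absolute continuity of the $L^n$ integrals of $f_{\pm}$.

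Next I would observe that $u$ restricted to $B_r(x_0)$ is itself a minimizer of $\mathcal{F}_A$ on that ball with respect to its own boundary trace, so Theorem \ref{AC} is in principle applicable with the frozen constant matrix $A_0:=A(x_0)$. Choosing $\varepsilon$ as the smallness threshold delivered by Theorem \ref{AC} for the prescribed exponent $\alpha$, and then shrinking $r_0$ accordingly, I would obtain
\begin{equation*}
\|u\|_{C^{0,\alpha}(B_{r_0/2}(x_0))}\le C
\end{equation*}
with a constant $C$ independent of $x_0\in D$. A finite subcovering of $\overline{D}$ by balls $\{B_{r_0/2}(x_i)\}_{i=1}^{N}$ and a standard patching argument (pairs of points in a common ball use the local H\"older estimate directly; pairs in distinct balls are handled via the universal $L^{\infty}$ bound on $u$ together with the positive lower bound on their distance) would then upgrade these local estimates to a single global bound $\|u\|_{C^{0,\alpha}(D)}\le C'$. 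Since $\alpha\in(0,1)$ was arbitrary, this is precisely the sought $C^{0,1^-}_{\loc}(\Omega)$ regularity.

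The delicate point I expect is verifying that the smallness threshold $\varepsilon=\varepsilon(\alpha)$ and the H\"older constant supplied by Theorem \ref{AC} can genuinely be chosen uniformly as $x_0$ ranges over $D$---namely, that their dependence on the ``data of the problem'' for the subproblem on $B_r(x_0)$, in particular on the boundary datum $u|_{\partial B_r(x_0)}$, is harmless. This is precisely the role of the universal $L^{\infty}$ bound from Theorem \ref{LIB} and the $H^1$ control from Remark \ref{rmk H1}: both trace back to the \emph{original} data of the problem on $\Omega$ and thereby dominate the boundary trace of $u$ on every sphere $\partial B_r(x_0)$, so that the constants arising in the application of Theorem \ref{AC} to the family of subproblems indexed by $x_0\in D$ are genuinely uniform.
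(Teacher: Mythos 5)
Your proposal is correct and follows essentially the same route as the paper's proof: use uniform continuity of $A$ on a compact neighborhood of $\Omega'$ to make $\|A - A(x_0)\|$ small on small balls, freeze the coefficient at $A_0 := A(x_0)$, and invoke Theorem \ref{AC} after scaling. The only cosmetic difference is that you also impose smallness on $\|f_\pm\|_{L^n(B_r(x_0))}$, which is unnecessary since Theorem \ref{AC} asks only for $f_\pm\in L^n$ with no smallness and the rescaling inside its proof already gains a factor of $r$ on the source; you also spell out the covering/patching and the uniformity of constants via Theorem \ref{LIB} and Remark \ref{rmk H1}, which the paper leaves implicit in the phrase ``properly scaled.''
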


\begin{proof}
Let $\Omega' \Subset  \Omega$, set $d := \mbox{dist}(\Omega, \Omega') $ and $\Omega'':= \{ x \in \Omega \suchthat \dist(x, \Omega') \le d/2\} $. Since $A \in C^0(\Omega)$, it is uniformly continuous in $\overline{\Omega''}$. Now, given $0< \alpha < 1$, we choose the corresponding $\varepsilon >0$ from Theorem \ref{AC} and take $0< \delta< d/2$ such that $|A(x) -A(y)|< \varepsilon $ whenever  $x,y \in \overline{\Omega''}$ and  $|x-y|<\delta$. Fix your favorite $x_0 \in \Omega''$ and declare $A=: A(x_0)$. Thus  $\|A(x) - A\|_{L^\infty(B_\delta(x_0))}< \varepsilon $ and by Theorem (\ref{AC}), properly scaled, gives that $u \in C^\alpha(B_\delta(x_0))$.
\end{proof}

\section{Small jumps and asymptotic Lipschitz estimates} \label{sct small jumps}

In this final Section, we present an asymptotic regularity estimate that states that given any $\alpha \in (0,1)$ a minimum of the free transmission problem is of class $C^{0,\alpha}_{\loc}$ provided the heterogeneous media $A_{+}$ and $A_{-}$ are sufficiently close in the $L^2$ norm. In such a perspective, one can see Corollary \ref{C1Loc}, as the particular case when the media have null distance. The strategy of showing such a result goes along the lines of the previous section; however at each step of the iterative process, approximating functions shall be a minimum of a problem contemplated by Corollary \ref{C1Loc}.  

Thus, within this Section, we work under the set-up where $A_{+}$ and $A_{-}$ are (separately) continuous and we fix a given modulus of continuity for $A_{+}$ and $A_{-}$, namely
\begin{equation}\label{mod cont}
	|A_{\pm}(x) - A_{\pm}(y)| \le \omega(|x-y|).
\end{equation}
As previously highlighted, even if $A_{+}$ and $A_{-}$ are (different) constant matrices, one cannot establish monotonicity formula for the homogeneous $\mathcal{F}_{A_{+}, A_{-},0,0}$ functional and Lipschitz estimates may fail. Notwithstanding, we are able to obtain the following asymptotic sharp regularity estimate. 

\begin{theorem}[Improved H\"older regularity] \label{HoR}
Under condition \eqref{mod cont},  given any $\alpha\in(0,1)$ and $\Omega' \Subset \Omega$,
there exists an $\epsilon>0$, depending only upon $n$, $\lambda$,
$\Lambda$, $\omega$, $\Omega$, $\Omega'$ and $\alpha$,  such that if 
\begin{equation} \label{smalljump}
\|A_+ - A_- \|_{L^{2}(\Omega')}\le\epsilon
\end{equation}  
then~$u\in C_\loc^{0,\alpha}(\Omega')$.
\end{theorem}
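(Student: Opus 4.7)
The plan is to adapt the perturbative/compactness scheme of Theorem \ref{AC} and Corollary \ref{C1Loc}, but with the limiting problem now being a continuous-coefficient \emph{single-matrix} transmission problem (that is, one of the form $\mathcal{F}_A$ with $A_+=A_-=A$). Under the smallness assumption \eqref{smalljump} combined with the uniform modulus of continuity \eqref{mod cont}, any limit of the approximating matrices $A_\pm$ must coincide, so the limiting minimum enjoys $C^{0,1^-}_{\loc}$ regularity by Corollary \ref{C1Loc}. The target H\"older exponent $\alpha<1$ is then reached by trading in an auxiliary exponent $\beta\in(\alpha,1)$.

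The technical heart is an \emph{approximation lemma}: for each $\beta\in(0,1)$ and $\eta>0$ there exist $\delta>0$ and a universal $K_0=K_0(\beta)$ such that if $u$ minimizes $\mathcal{F}_{A_\pm,\lambda_\pm,f_\pm}$ on $B_1$ with $0\in\Gamma(u)$ and
$$
\|A_+-A_-\|_{L^2(B_1)}+\|f_+\|_{L^n(B_1)}+\|f_-\|_{L^n(B_1)}\le\delta,
$$
then there is $h\in C^{0,\beta}(B_{1/2})$ with $h(0)=0$, $\|h\|_{C^{0,\beta}(B_{1/2})}\le K_0$ and $\|u-h\|_{L^\infty(B_{1/2})}\le\eta$. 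The proof is by contradiction along a sequence $u_k$ of minimizers: equicontinuity of $A_\pm^k$ (shared modulus $\omega$) plus Arzel\`a--Ascoli yields subsequential uniform limits $A_\pm^\star$, and the $L^2$-smallness combined with uniform continuity forces $A_+^\star=A_-^\star=:A^\star$. The a priori bounds (Theorem \ref{LIB}, Remark \ref{rmk H1}, Theorem \ref{reg}) let us extract $u_k\to u^\star$ uniformly and weakly in $H^1$, and a lower-semicontinuity computation modeled on the proof of Theorem \ref{ET} identifies $u^\star$ as a minimum of $\mathcal{F}_{A^\star,\lambda_\pm,0}$. Corollary \ref{C1Loc} then guarantees $u^\star\in C^{0,\beta}_{\loc}(B_{1/2})$ with a universal bound, delivering $h=u^\star$ and the contradiction.

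With the approximation lemma in hand, fix $\alpha\in(0,1)$, choose $\beta\in(\alpha,1)$, and pick $\rho\in(0,1/2)$ so that $K_0\rho^{\beta-\alpha}\le 1/2$; then set $\eta=\tfrac12\rho^\alpha$. Since $|h(x)|\le K_0|x|^\beta$, the triangle inequality yields $\sup_{B_\rho}|u|\le\rho^\alpha$ at the free-boundary point $0$. To iterate, consider the rescalings
$$
\tilde u(x):=\rho^{-k\alpha}u(\rho^k x), \quad \tilde A_\pm(x):=A_\pm(\rho^k x), \quad \tilde f_\pm(x):=\rho^{k(2-\alpha)}f_\pm(\rho^k x),
$$
for which $\tilde u$ is a minimum of the analogously defined functional on $B_1$. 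The sources scale favorably, $\|\tilde f_\pm\|_{L^n(B_1)}=\rho^{k(1-\alpha)}\|f_\pm\|_{L^n(B_{\rho^k})}\to 0$, and by continuity of $A_\pm$ one has $|\tilde A_+(x)-\tilde A_-(x)|\le |A_+(0)-A_-(0)|+2\omega(\rho^k)$ on $B_1$; the first term is made arbitrarily small by the standard pointwise-from-$L^2$ bound $|A_+(0)-A_-(0)|\le 2\omega(r)+Cr^{-n/2}\|A_+-A_-\|_{L^2(B_r)}$, which forces the choice of $\epsilon$ in \eqref{smalljump}. The hypotheses of the approximation lemma therefore persist at every scale, so the decay $\sup_{B_{\rho^k}}|u|\le\rho^{k\alpha}$ iterates and dyadic interpolation yields $|u(x)|\le\rho^{-\alpha}|x|^\alpha$, i.e.\ $C^{0,\alpha}$ control at every free-boundary point. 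Regularity away from $\Gamma(u)$ is then patched by replicating the geometric argument at the end of Section \ref{FHR}: inside each phase Proposition \ref{propweakequation} with interior Schauder/Calder\'on--Zygmund estimates and Harnack's inequality (cf.\ \eqref{shauderestimate}--\eqref{pointwise est holder}) gives $C^{0,\alpha}$ control at scale $d=\dist(x_0,\Gamma(u))$, whose blow-up as $d\to 0$ is compensated by the free-boundary estimate just established.

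The main obstacle is Step~1: the compactness scheme must collapse to a single-matrix limit problem, and this is precisely where continuity of $A_\pm$ combined with $L^2$-smallness \eqref{smalljump} is essential (it upgrades $L^2$-closeness to uniform closeness of the extracted limits $A_\pm^\star$). A secondary but structural issue is that Corollary \ref{C1Loc} only delivers $C^{0,\beta}$ for each $\beta<1$ rather than genuine Lipschitz regularity, which forces the target exponent $\alpha$ to be approached from below through an auxiliary $\beta\in(\alpha,1)$; this is the precise mechanism responsible for the asymptotic nature of the estimate and for why $\alpha=1$ cannot be reached.
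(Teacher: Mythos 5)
Your proposal follows essentially the same architecture as the paper's proof: a compactness approximation lemma (minimizers with $L^2$-close matrices and small sources are uniformly approximated by minimizers of the single-matrix functional $\mathcal{F}_{A^\star}$, which Corollary~\ref{C1Loc} controls in $C^{0,\beta}$ for every $\beta<1$), followed by a dyadic iteration at free boundary points, and finally the geometric patching argument from the end of Section~\ref{FHR} to propagate the estimate into the two phases. The parametrization $\beta\in(\alpha,1)$ is equivalent to the paper's $\alpha+\tau$ with $\tau<1-\alpha$, and including $\|f_\pm\|_{L^n}$-smallness in the lemma hypothesis versus keeping $f$ fixed and invoking Corollary~\ref{C1Loc} for $L^n$ sources are interchangeable bookkeeping choices.

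Where you genuinely improve on the written argument is in the inductive step. Under the rescaling $\tilde A_\pm(x)=A_\pm(\rho^k x)$, the $L^2$ hypothesis does \emph{not} propagate: by change of variables $\|\tilde A_+-\tilde A_-\|_{L^2(B_1)}^2=\rho^{-kn}\|A_+-A_-\|_{L^2(B_{\rho^k})}^2$, and the factor $\rho^{-kn}$ blows up. The paper's proof simply asserts that the case $k=1$ can be reapplied to the rescaled minimizer without verifying that the smallness hypothesis of the approximation lemma is preserved. Your observation that the shared modulus $\omega$ upgrades $L^2$-smallness to pointwise smallness --- via $|A_+(0)-A_-(0)|\le 2\omega(r)+Cr^{-n/2}\|A_+-A_-\|_{L^2(B_r)}$, with $r$ fixed first and then $\epsilon$ chosen small --- and that pointwise smallness \emph{is} scale-invariant under $x\mapsto\rho^k x$, is exactly what makes the induction close. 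This is the correct repair; the paper's authors clearly had the continuity assumption \eqref{mod cont} available for precisely this reason (it is already used for equicontinuity in the compactness step), but did not record the argument, and your write-up is more careful on this point.
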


\begin{proof} Since our estimates are of local character, after proper restriction and scaling, we may assume, with no loss of generality, that $\Omega=B_1$ and the origin is a free boundary point.

We will initially prove that fixed a $\delta>0$, an $ \alpha
\in(0,1)$ and a $0< \tau < 1-\alpha$, there exist $\epsilon$, $C_0>0$,  that depend only upon $n$, $\lambda$,$\Lambda$, $\omega$,  $\tau$ and $\delta$,
and a function $h \in C^{ \alpha + \tau}(B_{1/2})$ such that if
$u\in H^1_{u_0}(B_{1/2})$ is a 
minimizer of $\mathcal{F}_{A_{+}, A_{-}}$ in $B_{1/2}$ with respect to some Dirichlet datum $u_0$, with $L^2$ norm under control, then 
\begin{equation}\label{C1}
 \|A_+-A_-\|_{L^{2}(B_{9/10})}\le\epsilon 
\end{equation}
implies that
\begin{equation}\label{C11}
	\| u-h\|_{L^\infty(B_{2/5})}\le\delta \quad {\mbox{ and }} \quad  \|h\|_{C^{\alpha + \tau}(B_{1/4})}\le C_0.
\end{equation}
 
We verify the above claim by compactness methods. We suppose, searching a contradiction, that such a fact does not hold. This implies the existence of a sequence of $\epsilon\searrow0$
and the existence of families of elliptic matrices $A_+^\epsilon$ and  $A_-^\epsilon $ subjected to the assumptions above, and corresponding minimizer $u^\epsilon$ to $\mathcal{F}_{A_{+}, A_{-}}$, that are within an honest distance from universally bounded $C^{ \alpha + \tau}$ functions. Since the family $ A_+^\varepsilon $ is bounded by ellipticity and equicontinuous by virtue of \eqref{mod cont},  we may assume, up to subsequence, that~$A_+^\epsilon$ converges in $ L^{\infty}_{\loc} (B_1)$ to some $ A_+^0 $ and $ A_-^\varepsilon $ converges in $ L^\infty_{\loc} (B_1) $ to $ A_-^{0} $. By the closeness condition \eqref{C1}, we deduce $A_+^0 = A_-^0 =: A^\star $. 

From Theorem \ref{reg} and Remark \ref{rmk H1}, we know that
$$
	\|u^\epsilon\|_{H^1(\overline{B_{1/2}})} \quad \text{and} \quad \|u^\epsilon\|_{C^{0,\alpha_0}(\overline{B_{1/2}})}
$$	
are bounded uniformly in $\epsilon$. Up to a subsequence $u^\varepsilon$ converges to a function $u^\star$ in $L^\infty(\overline{B_{1/2}})$ and weakly in $H^1(\overline{B_{1/2}})$.
A contradiction  will be obtained once we verify that $u^\star$ is a minimum of a functional $\mathcal{F}_{A^\star}$, for which  Corollary \ref{C1Loc} grants local $C^{0, \alpha+\tau}$ universal estimates. From the uniform convergences $A^\varepsilon_{\pm} \rightarrow A^\star$ and $u^\varepsilon \rightarrow u^\star$ in $L^\infty(B_{1/2})$ and weak convergence $u^\varepsilon \rightharpoonup u^\star$ in $H^1(B_{1/2})$ we compute
\begin{eqnarray*}
& &\hspace{-0,4cm} \liminf_{\epsilon\searrow0} \hspace{-0,4cm}
\int\limits_{B_{1/2}\cap\{u^\epsilon>0\}} \hspace{-0,5cm} \{ |\nabla u^\epsilon|^2_{A^\epsilon_+} +  \lambda_{+} + f_{+}u^{\varepsilon} \}\,dx 
\ + \hspace{-0,6cm}
\int\limits_{B_{1/2}\cap\{u^\epsilon<0\}} \hspace{-0,5cm} \{ |\nabla u^\epsilon|^2_{A^\epsilon_-}+ \lambda_{-} + f_{-}u^{\varepsilon} \}\,dx \\
&=& \hspace{-0,4cm} \liminf_{\epsilon\searrow0} \hspace{-0,5cm}
\int\limits_{B_{1/2}\cap\{u^\epsilon>0\}} \hspace{-0,5cm} \{|\nabla u^\epsilon|^2_{A^\star}+ \lambda_{+} + f_{+}u^{\varepsilon} \} \,dx \ + \hspace{-0,6cm}
\int\limits_{B_{1/2}\cap\{u^\epsilon<0\}} \hspace{-0,4cm} \{|\nabla u^\epsilon|^2_{A^\star}+ \lambda_{-} + f_{-}u^{\varepsilon} \}\,dx
\\ &\qquad\qquad \\
& & + \hspace{-0,3cm} \hspace{-0,2cm}
\int\limits_{B_{1/2}\cap\{u^\epsilon>0\}} \hspace{-0,2cm}
 \langle (A^\epsilon_+ -A^\star)\nabla u^\epsilon,
\nabla u^\epsilon\rangle \,dx \ + \hspace{-0,3cm}
\int\limits_{B_{1/2}\cap\{u^\epsilon<0\}} \hspace{-0,2cm}
 \langle (A^\epsilon_- -A^\star)\nabla u^\epsilon,
\nabla u^\epsilon\rangle\,dx
\\ &\ge & \hspace{-0,6cm}
\int\limits_{B_{1/2}\cap\{u^\star>0\}} \hspace{-0,2cm}\{ |\nabla u^\star|^2_{A^\star} + \lambda_{+} + f_{+}u^{\star} \} dx
\  +  \hspace{-0,4cm} \int\limits_{B_{1/2}\cap\{u^\star<0\}} \hspace{-0,2cm} \{ |\nabla u^\star|^2_{A^\star} + \lambda_{-} + f_{-}u^{\star} \} dx.
 \end{eqnarray*}
This shows that 
$$\liminf_{\varepsilon \searrow 0} \mathcal{F}_{A^\varepsilon_{\pm}}(u^\varepsilon) \ge \mathcal{F}_{A^\star}(u^\star).$$
 Moreover, for any $v\in H^1_{u_0}(B_1)$ we have
\begin{eqnarray*}
\mathcal{F}_{A^\star}(v) & = & \hspace{-0,2cm }\int\limits_{B_{1/2}\cap\{v>0\}} \hspace{-0,2cm} \{ |\nabla v|^2_{A^\star}  +   \lambda_{+} + f_{+}v \} \,dx
\  + \hspace{-0,2cm} \int\limits_{B_{1/2}\cap\{v<0\}} \hspace{-0,2cm} \{ |\nabla v|^2_{A^\star} + \lambda_{-} + f_{-}v \} \,dx\\
&= &\hspace{-0,2cm}  \int\limits_{B_{1/2}\cap\{v>0\}} \hspace{-0,2cm} \{|\nabla v|^2_{A^\varepsilon_+} + \lambda_{+} + f_{+}v \} \,dx \ + \hspace{-0,2cm}
\int\limits_{B_{1/2}\cap\{v<0\}} \hspace{-0,2cm} \{|\nabla v|^2_{A^\varepsilon_-}+ \lambda_{-} + f_{-}v \}\,dx\\ 
\\ &+ & \hspace{-0,2cm}
 \int\limits_{B_{1/2}\cap\{v>0\}}  \hspace{-0,2cm} \langle (A^\star - A^\varepsilon_+) \nabla v, \nabla v \rangle dx
\  + \int\limits_{B_{1/2}\cap\{v<0\}} \hspace{-0,2cm} \langle (A^\star - A^\varepsilon_-) \nabla v, \nabla v \rangle  dx.\\
&=& \mathcal{F}_{A_{\pm}^\varepsilon}(v) + \text{o}(1),
\end{eqnarray*}
as $\varepsilon \searrow 0$. By the minimality of $u^\epsilon$, for any $v\in H^1_{u_0}(B_{1/2})$ there holds
$$ 
	\mathcal{F}_{A^\star}(v) =\lim_{\varepsilon \searrow 0} \mathcal{F}_{A^\varepsilon_\pm}(v) \ge \liminf_{\varepsilon \searrow 0} \mathcal{F}_{A^\varepsilon_{\pm}}(u^\varepsilon) \ge \mathcal{F}_{A^\star}(u^\star). 
$$
Accordingly, $u^\star$ is a  minimizer of the functional (\ref{F}) with  $A_+=A_-=A^\star$. From Corollary \ref{C1Loc}, the limiting function $u^{\star}$ lies in $C^{\alpha + \tau}(B_{1/4}) $ and 
$ \|u^\star\|_{C^{\alpha +\tau}(B_{1/4})}\le C_0$, for some universal $C_0>0$. Thus, taking $h:=u^\star$ and $\epsilon$ is small enough, drives us to a contradiction on our starting
assumption.  

\bigskip

In the sequel, we claim that there exists $\rho \in (0,1/4)$ such that
\begin{equation}\label{step}
\sup_{x\in B_{\rho^k}} |u(x)|\le \rho^{k\alpha},
\end{equation}
for any $k\ge 1$. The proof is by induction. To verify the case~$k=1$, we employ triangular inequality and the existence of universal approximating $C^{\alpha+\tau}$ functions. That is, we estimate
$$ 
	\sup_{x\in B_{\rho}} |u(x)|\le \sup_{x\in B_{\rho}} |u(x)-h(x)|+ \sup_{x\in B_{\rho}}|h(x)-h(0)|\le
	\delta+C_0\rho^{\alpha + \tau} \le \rho^\alpha,
$$
if we choose $ \rho $ small enough such that 
$$
	 \rho < \frac{1}{(2C_0)^{1/\tau}}
$$
and $\delta = \frac{1}{2} \rho^\alpha$. Verified \eqref{step}  for  $k\ge1$
we now prove  $(k+1)$th step. For that, we consider the scaled function
$$
	v(x) := \frac{1}{\rho^\alpha} u(\rho x).
$$
As before, $v$ is a minimum of the fucntional 
\begin{equation} \label{FA+til}
\begin{array}{lll}
	 	{\mathcal{\tilde F}}_{\tilde A_{\pm}, \lambda_{\pm} f_{\pm} }(v)&:= & \displaystyle  \int\limits_{B_1 \cap\{ v> 0\}} \left \{\dfrac{1}{2} |\nabla v(x)|^2_{\tilde A} + \tilde \lambda_{+}(x) + \tilde f_{+}v \right \}  \,dx \\
		&+&
		\displaystyle  \int\limits_{B_1 \cap\{ v\le 0\}} \left \{\dfrac{1}{2}|\nabla v(x)|^2_{\tilde A_-}  +\tilde  \lambda_{-}(x) + \tilde f_{-}v \right \}\,dx,
	\end{array}
\end{equation}
over $H^1_{\tilde u}(B_1) $, where 
\begin{equation}
\tilde A_{\pm}(x) := A_{\pm}(\rho^k x), \ \tilde \lambda_{\pm}(x) := \rho^{2k(1-\alpha)} \lambda_{\pm}(x) \ \mbox{and} \ \tilde f_{\pm}(x) : = \rho^{k(2-\alpha)}f_{\pm}(x).
\end{equation}
Thus we can apply the induction step $k=1$ to $v$ and conclude the induction process. 

We have proven $C^{0,\alpha}$ regularity of $u$  along the free boundary. To obtain locally $C^{0,1^-} $ regularity, we use the same geometric argument  employed at the end of the proof of Theorem \ref{AC}. 
\end{proof}

\end{document}